\newcommand{\set}[1]{\left\{#1\right\}}
\newcommand{\norm}[1]{\left\lVert#1\right\rVert}
\newcommand{\bprod}[1]{\bigl(#1\bigr)}
\newcommand{\ind}[1]{\mathbbm{1}_{#1}}
\DeclareMathOperator{\AC}{\!\textit{AC\/}}
\DeclareMathOperator{\hyperIFI}{\sideset{_1}{_1}{\mathop F}}
\newcommand{\R}{\mathbb{R}}
\newcommand{\pr}{\mathrm{P}}
\newcommand{\ex}[1]{\mathrm{E}\left[\,#1\,\right]}
\DeclareMathOperator{\indicatorFun}{\mathbbm{1}}
\begin{document}
\title*{Gaussian processes with Volterra kernels}
\author{Yuliya Mishura, Georgiy Shevchenko and Sergiy Shklyar}
\institute{Yuliya Mishura \at
	Department of Probability Theory, Statistics and Actuarial Mathematics,
	Taras Shevchenko National University of Kyiv,
	64, Volodymyrs'ka St.,
	01601 Kyiv, Ukraine\\
	\email{myus@univ.kiev.ua}
	\and  Georgiy Shevchenko\at
	Department of Probability Theory, Statistics and Actuarial Mathematics,
	Taras Shevchenko National University of Kyiv,
	64, Volodymyrs'ka St.,
	01601 Kyiv, Ukraine\\
	\email{zhora@univ.kiev.ua}
	\and Sergiy Shklyar \at
	Department of Probability Theory, Statistics and Actuarial Mathematics,
	Taras Shevchenko National University of Kyiv,
	64, Volodymyrs'ka St.,
	01601 Kyiv, Ukraine\\
	\email{shklyar@univ.kiev.ua}
}

\maketitle

\abstract{
	We study   Volterra processes $X_t = \int_0^t K(t,s)dW_s$, where $W$ is a standard Wiener process, and the kernel has the form $K(t,s) = a(s) \int_s^t b(u) c(u-s) du$. This form generalizes the Volterra kernel for fractional Brownian motion (fBm) with Hurst index $H>1/2$.    We establish smoothness properties of $X$, including continuity and H\"{o}lder property. It happens that its H\"{o}lder smoothness is close to well-known H\"{o}lder smoothness of fBm but is a bit worse. We give a comparison with fBm for any smoothness theorem.  Then we  investigate the problem of inverse representation of $W$ via $X$ in the case  where $c\in L^1[0,T]$ creates a Sonine pair, i.e.\@ there exists $h\in L^1[0,T]$ such that $c * h \equiv 1$.
It is a natural extension of the respective property of fBm that generates the same filtration with the underlying Wiener process. Since the inverse representation of the Gaussian processes under consideration are based on the properties of  Sonine pairs,  we provide several examples of Sonine pairs, both well-known and new.
\keywords{Gaussian process, Volterra process, Sonine pair, continuity, H\"{o}lder property, inverse representation}
}

\section*{Introduction} Among various classes of Gaussian processes, consider the class of the processes admitting the integral representation via some Wiener process. Such processes   arise in finance, see  e.g. \cite{Boguslavskaya2018}.  They are the natural extension of fractional Brownian motion (fBm) which admits the integral representation via the  Wiener process, and the Volterra kernel of its representation consists of power functions. The solution of many problems related to fBm is based on the  H\"{o}lder properties of its trajectories. Therefore it is interesting to consider the smoothness properties of Gaussian processes admitting the integral representation via some Wiener process, with the representation kernel  that generalizes the kernel in the representation of fBm. The next question is what properties should the kernel have in order for the Wiener process and the corresponding Gaussian process to generate the same filtration. It turned out that the functions in the kernel should form, in a specific way, so called Sonine pair, property that the components of the kernel   generating fBm have. Thus, the properties of the Gaussian process turned out to be directly related to the analytical properties of the generating kernel. The  present work is devoted to the study of these properties. It is organized as follows. Section \ref{sec-2} is devoted to the smoothness properties of the Gaussian processes generated by Volterra kernels. Assumptions which supply the existence and continuity of the Gaussian process are provided. Then the H\"{o}lder properties are established. They have certain features. Namely, under reasonable assumptions on the kernel we can establish only H\"{o}lder property up to order $1/2$ while fBm with Hurst index $H$ has H\"{o}lder property of the trajectories up to order $H$, and for $H>1/2$ (exactly the case from which we start) fBm has  better smoothness properties. In this connection, we establish the conditions of smoothness that is comparable with the one for fBm, but only on any interval separated from zero. Finally, we establish the conditions on the kernel supplying  H\"{o}lder property at zero.  Section \ref{sec-3} describes how the generalized fractional calculus related to a Volterra process with Sonine kernel can be used to invert the corresponding covariance operator.  Section \ref{sec:examples} contains examples of Sonine pairs, and Section \ref{app} contains all necessary auxiliary results.

\section{Gaussian Volterra processes and their smoothness properties}\label{sec-2}

Let $(\Omega, \mathcal{F}, \mathbf{F}=\{\mathcal{F}_t, t\ge 0\}, \mathbf{P})$ be a stochastic basis with filtration, and let $W=\{W_t, t\ge 0\}$ be a Wiener process adapted to this filtration.
Consider a Gaussian process of the form
\begin{equation}\label{eq:volterra}
X_t = \int_0^t K(t,s) dW_s
\end{equation}
where $K\in L^2([0,T]^2)$ is a Volterra kernel, i.e. $K(t,s) = 0$ for $s>t$. Obviously, $X$ is also adapted to the filtration $\mathbf{F}$. Recall that a very common example of such process is a fractional Brownian motion (fBm) with Hurst index $H$, i.e., a Gaussian process $B^H=\{B^H_t, t\ge 0\}$, admitting a representation $$B^H_t=\int_0^t K(t,s) \, dW_s,$$
with some Wiener process $W$ and Volterra kernel \begin{equation}\label{eq:formzall}\begin{aligned}
K(t,s) &= c_H
s^{1/2 - H}
\Big(
(t  (t-s))^{H-1/2} -  (H- 1/2)
\int_s^t u^{H-3/2} (u-s)^{H-1/2}\, du \Big)\indicatorFun_{0<s<t},
\end{aligned}\end{equation}
where
 $c_H = \left(
\frac{2 H \,\Gamma(\frac32 - H)}{\Gamma(H+\frac12)\,\Gamma(2-2H)}
\right)^{1/2}.
$
If $H>\frac12$, then the kernel $K$ from \eqref{eq:formzall} can be simplified
to
\begin{equation}\label{equ-bigH}
K(t,s) =
\left(H-\frac12\right)
c_H
s^{1/2 - H}
\int_s^t u^{H-1/2} (u-s)^{H-3/2}\, du.
\end{equation}

Now, motivated by a fractional Brownian motion with $H>1/2$, we assume that the kernel in the  representation   \eqref{eq:volterra} is given by
\begin{equation}\label{eq:abckernel}
K(t,s) = a(s) \int_s^t b(u) \, c(u-s) du,
\end{equation}
where $a,b,c:[0,T]\to \R$ are some measurable functions. Since many applications of fBm are based on its smoothness properties, we consider what properties of functions $a,b,c$ provide a certain smoothness of the process $X$  which, in the case under consideration, takes the form

\begin{equation}\label{eq:volterra1}
X_t = \int_0^t \left(a(s) \int_s^t b(u) \, c(u-s) \, du \right)dW_s, \; t\in [0,T].
\end{equation}

Our first goal is to  investigate the assumptions which supply the existence and continuity of process $X$. Considering   $L$-spaces, we put, as is standard, $1/\infty=0$ and $1/0=\infty.$

\begin{theorem}\label{thm:lem_XEcontin}
Assume that
{\settowidth{\leftmargini}{(K2)~\,}
\begin{enumerate}
\renewcommand{\theenumi}{{\upshape(K\arabic{enumi})}}
\renewcommand{\labelenumi}{\upshape(K\arabic{enumi})}
	\item\label{cond:K1}
	  $a\in L^{p}[0,T]$,
	$b\in L^{q}[0,T]$, and
	$c\in L^{r}[0,T]$
	for   $p\in[2,\infty]$,
	$q\in[1,\infty]$,
	$r\in[1,\infty]$,
	such that
	$1/p + 1/q + 1/r \le \frac32$.
\end{enumerate}}\relax
Then
\begin{equation*}
\sup_{t\in[0,T]} \norm{K(t,\,\cdot\,)}_{L^2[0,t]}<\infty,
\end{equation*}
which means that the process $X$ is well defined.

If, in addition, $1/p + 1/r < \frac32$,
then the process $X$ has a continuous modification.
\end{theorem}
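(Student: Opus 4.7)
My plan is to prove both assertions by H\"older/Young inequalities applied to the inner ``$b\ast c$'' factor of the kernel, supplemented by Kolmogorov's continuity criterion for the second part. Setting $g(t,s):=\int_s^t b(u)c(u-s)\,du$ so that $K(t,s)=a(s)g(t,s)$, I would bound $g(t,\cdot)$ in two regimes. When $1/q+1/r\le 1$, the three-function H\"older inequality yields the pointwise estimate
\[
|g(t,s)|\le (t-s)^{1-1/q-1/r}\,\|b\|_q\|c\|_r.
\]
When $1/q+1/r>1$, I would view $g(t,s)=\bigl(b\mathbbm{1}_{[0,t]}\ast\check c\bigr)(s)$ with $\check c(w):=c(-w)$ and apply Young's convolution inequality to obtain $g(t,\cdot)\in L^m[0,t]$ with $1/m=1/q+1/r-1\in(0,1]$ and $\|g(t,\cdot)\|_{L^m[0,t]}\le\|b\|_q\|c\|_r$. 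In either case, a H\"older step in $s$ with exponents $p/2$ and $p/(p-2)$ (the endpoints $p\in\{2,\infty\}$ handled separately) then gives $\|K(t,\cdot)\|_{L^2[0,t]}^2\le C\|a\|_p^2\|b\|_q^2\|c\|_r^2$ uniformly in $t\in[0,T]$, the exponent bookkeeping reducing exactly to $1/p+1/q+1/r\le 3/2$, i.e.\ (K1).

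For the continuity claim, the It\^o isometry decomposes, for $s<t$,
\[
E|X_t-X_s|^2=\int_0^s a(u)^2\Bigl(\int_s^t b(v)c(v-u)\,dv\Bigr)^{2}du+\int_s^t a(u)^2\Bigl(\int_u^t b(v)c(v-u)\,dv\Bigr)^{2}du.
\]
Each summand has the same structure as $\|K(t,\cdot)\|_{L^2}^2$ but with $b$ replaced by its restriction to $[s,t]$ (first summand) or with the inner domain shortened to $[u,t]$ (second summand). I would re-run the H\"older/Young scheme from the first part, additionally invoking the interpolation $\|b\mathbbm{1}_{[s,t]}\|_{q_1}\le(t-s)^{1/q_1-1/q}\|b\|_q$ for a free auxiliary exponent $q_1\in[1,q]$, which injects a factor $(t-s)^{2(1/q_1-1/q)}$ into the final bound. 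The strict inequality $1/p+1/r<3/2$ is precisely what leaves enough room to choose $q_1<q$ compatible both with the Young/H\"older applicability (regime constraint between $q_1$ and $r$) and with the outer H\"older bookkeeping $1/p+1/q_1+1/r\le 3/2$, producing $E|X_t-X_s|^2\le C(t-s)^{2\gamma}$ with a strictly positive $\gamma=1/q_1-1/q$. Gaussianity of $X$ then upgrades this to $E|X_t-X_s|^{2n}\le c_n C^n(t-s)^{2n\gamma}$, and Kolmogorov's continuity criterion, applied with $n>1/(2\gamma)$, delivers the continuous modification.

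The technical heart of the argument is the parameter analysis for $q_1$ in the continuity step: I need to check that, in every regime compatible with (K1) and $1/p+1/r<3/2$, an admissible $q_1<q$ exists, which calls for a subcase split according to the position of $1/q+1/r$ relative to $1$ and of $1/p+1/r$ relative to $1/2$. The boundary cases of (K1) are the most delicate and require $q_1$ to be tuned right at the edge of admissibility.
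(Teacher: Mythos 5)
Your first part is essentially the paper's argument: write $K(t,s)=a(s)\bigl(b\indicatorFun_{[0,t]}*\tilde c\bigr)(s)$, control the convolution by Young's inequality (or, in your low-regularity regime $1/q+1/r\le 1$, by the pointwise H\"older bound, which the paper instead avoids by a WLOG reduction replacing $r$ with $r'=\min(r,q/(q-1))$), then a non-conjugate H\"older step in $s$ and the embedding $L^{m}[0,t]\hookrightarrow L^{2}[0,t]$ for $m\ge 2$; the bookkeeping $1/p+1/q+1/r\le\frac32$ closes exactly as you say. That part is fine.

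The continuity step has a genuine gap. Your plan is to manufacture a bound $\ex{(X_{t_2}-X_{t_1})^2}\le C(t_2-t_1)^{2\gamma}$ with $\gamma=1/q_1-1/q>0$ by interpolating $\|b\indicatorFun_{[t_1,t_2]}\|_{q_1}\le (t_2-t_1)^{1/q_1-1/q}\|b\|_q$ and re-running the scheme with $q_1$ in place of $q$. But that re-run requires $1/p+1/q_1+1/r\le\frac32$ with $1/q_1>1/q$, which is impossible precisely in the boundary case $1/p+1/q+1/r=\frac32$ (with $q<\infty$, e.g. $p=2$, $1/q=0.1$, $1/r=0.9$), a case the theorem explicitly allows. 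The hypothesis $1/p+1/r<\frac32$ only guarantees that \emph{some} admissible finite $q_1$ exists, not one strictly smaller than $q$; the condition your scheme actually needs is the strict inequality $1/p+1/q+1/r<\frac32$, which is the hypothesis of the paper's H\"older lemma (Lemma~\ref{lem:pabc_Holder}), not of this theorem. The failure is not just a bookkeeping artifact: for $b$ merely in $L^q$ the quantity $\|b\indicatorFun_{[t_1,t_2]}\|_q$ tends to $0$ with no power rate (e.g.\ a logarithmically corrected singularity), so no estimate of the form $C(t_2-t_1)^{2\gamma}$ is available in general. The paper's resolution is to stop at $\ex{(X_{t_2}-X_{t_1})^2}\le (F(t_2)-F(t_1))^{2/q}$ with $F(t)=C^q\int_0^t|b(s)|^q\,ds$ nondecreasing and continuous, and then apply Kolmogorov's criterion after the deterministic time change $u=F(t)$ (Lemma~\ref{lem:GContF}): the reparametrized process $Y_u=X_{F^{-1}(u)}$ satisfies a clean power bound in $u$, hence has a continuous modification, and composing back with the continuous $F$ yields a continuous modification of $X$ without any H\"older rate in $t$. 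You would need to replace your final step by this (or an equivalent intrinsic-metric) argument to cover the boundary case.
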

\begin{remark}\label{rem1}  In the case of fBm    with $H>1/2$
we have $a(t) = \left(H-\frac12\right) c_H t^{1/2-H}$,\linebreak
$b(t) = t^{H-1/2}$ and $c(t) = t^{H-3/2}$. Therefore, $p$ can be any number such that\linebreak $\frac{1}{2}\mathbin{>}\frac{1}{p}\mathbin{>}H{-}\frac{1}{2}$, $q$ can be  any number from $[1,\infty]$, and $r$ can be any number such that $1 > \frac{1}{r}> \frac{3}{2}-H$. It means that  both  conditions of Theorem \ref{thm:lem_XEcontin} are satisfied if we put $\frac{1}{p}=H-\frac{1}{2}+\frac{\varepsilon}{3}$, $\frac{1}{q}=\frac{\varepsilon}{3}$ and $\frac{1}{r}= \frac{3}{2}-H+\frac{\varepsilon}{3}$, where
$0 \mathbin{<} \epsilon \mathbin{<} \min\bigl(3\bigl(H-\frac12\bigr), \: 3(1-H), \: \frac12\bigr)$.
\end{remark}
\begin{proof}
For both statements, without loss of generality,
we can assume that $1/q + 1/r \ge 1$. Considering
   statement 2) we can assume that
$q<\infty$.

1)\quad Extend the functions $a$, $b$, $c$ onto the entire
set $\mathbb{R}$ assuming $a(s) = b(s) =\linebreak c(s) = 0$
for all $s\not\in[0,T]$.
Extend the kernel $K(t,s)$ assuming
$K(t,s) = 0$ for $s\not\in[0,t]$.\linebreak
Then we have
\begin{equation}\label{eq:Kts=conv}
K(t,s) = a(s) \, (b \indicatorFun_{[0,t]} * \tilde c)(s),
\quad \mbox{for all} \quad
0\le t \le T,
\quad
s \in \mathbb{R},
\end{equation}
where $\tilde c(v) = c(-v)$.
By Young's convolution inequality \eqref{neq:Young}
\begin{equation}\label{eq:tempbi0t}
\|b \indicatorFun_{[0,t]} * \tilde c\|_{(1/q + 1/r - 1)^{-1}}
\le
\|b \indicatorFun_{[0,t]}\|_q \,
\|\tilde c\|_r \le
\|b\|_q \, \|c\|_r.
\end{equation}
(Here we applied inequality $1/q + 1/r \ge 1$.)
By H\"older inequality \eqref{neq:Holder}
for non-conjugate exponents
\begin{align}
\|K(t,\,\cdot\,)\|_{ (1/p + 1/q + 1/r - 1)^{-1}}
&=
\|a \, (b \indicatorFun_{[0,t]} * \tilde c)\|_{ (1/p+1/q+1/r-1)^{-1}} \nonumber \\
&\le
\|a\|_p \,
\|b \indicatorFun_{[0,t]} * \tilde c\|_{(1/q+1/r-1)^{-1}}
\le
\|a\|_p \,
\|b\|_q \,
\|c\|_r .
\label{neq:tempKtd}
\end{align}
Hence
$K(t,\,\cdot\,) \in L^{ (1/p + 1/q + 1/r - 1)^{-1}}[0,t]$.
Since
$(1/p + 1/q + 1/r - 1)^{-1} > 2$,
we conclude that
$K(t,\,\cdot\,) \in    L^2[0,t]$, and it follows from \eqref{neq:tempKtd} that the norms are uniformly bounded.
It completes the proof of the first
statement.

2)\quad Let $0\le t_1 < t_2 \le T$. It follows from  \eqref{eq:Kts=conv} that
\begin{equation}
K(t_2,s) - K(t_1,s) =
a(s)\, (b\indicatorFun_{(t_1,t_2]} * \tilde c) (s),
\qquad s\in\mathbb{R}.
\label{eq:difKt2sKt1s}
\end{equation}
Similarly to \eqref{eq:tempbi0t} and \eqref{neq:tempKtd},
\begin{gather*}
\|b \indicatorFun_{(t_1,t_2]} * \tilde c\|_{(1/q+1/r-1)^{-1}}
\le
\|b \indicatorFun{(t_1,t_2]}\|_q \,
\|\tilde c\|_r \le
\|b \indicatorFun_{(t_1,t_2]}\|_q \, \|c\|_r,
\end{gather*}
and
\begin{multline*}
  \|K(t_2,\,\cdot\,) - K(t_1,\,\cdot\,)\|_{(1/p + 1/q +  1/r - 1)^{-1}}
 =
\|a \, (b \indicatorFun_{(t_1,t_2]} * \tilde c)\|_{(1/p + 1/q + 1/r - 1)^{-1}} \\
 \le
\|a\|_p \,
\|b \indicatorFun_{(t_1,t_2]} * \tilde c\|_{(1/q+1/r-1)^{-1}}
 \le
\|a\|_p \,
\|b \indicatorFun_{(t_1,t_2]}\|_q \,
\|c\|_r.
 \end{multline*}
Notice that
 $2 < (1/p + 1/q + 1/r - 1)^{-1}$,
and the function
$K(t_2,\,\cdot\,) - K(t_1,\,\cdot\,)$
is zero-valued outside the interval $[0,t_2]$.
Apply the inequality \eqref{neq:LL} between
the norms in $L^2[0,t_2]$ and
$L^{(1/p + 1/q +1/r - 1)^{-1}}[0,t_2]$:
\begin{align*}
\|K(t_2,\,\cdot\,) - K(t_1,\,\cdot\,)\|_2
 &\le
\|K(t_2,\,\cdot\,) - K(t_1,\,\cdot\,)\|_{(1/p + 1/q + 1/r - 1)^{-1}}
t_2^{\frac32 - 1/p - 1/q - 1/r}
\\ &\le
\|a\|_p \,
\|b \indicatorFun_{(t_1,t_2]}\|_q \,
\|c\|_r
t_2^{\frac32 - 1/p - 1/q - 1/r}
\le C \|b \indicatorFun_{(t_1,t_2]}\|_q,
\end{align*}
with $C = T^{\frac32 - 1/p - 1/q - 1/r}
\|a\|_p \, \|c\|_r$.
Hence
\begin{align*}
\ex{ (X_{t_2} - X_{t_1})^2} &=
\|K(t_2,\,\cdot\,) - K(t_1,\,\cdot\,)\|_2^2
\le
C^2 \|b \indicatorFun_{(t_1,t_2]}\|_q^2
\\ &=
C^2 \left( \int_{t_1}^{t_2} |b(s)|^q ds\right)^{\!2/q}
= (F(t_2) - F(t_1))^{2/q},
\end{align*}
where
\[
F(t) = C^q \int_0^{t} |b(s)|^q\, ds
\]
is a nondecreasing function.
By Lemma~\ref{lem:GContF},
the process $\{X_t,\; t\in[0,T]\}$ has a continuous modification.
\end{proof}

Now, let us establish the conditions supplying H\"{o}lder properties of $X$.
\begin{lemma}\label{lem:pabc_Holder}
Assume that
$a \in L^p [0,T]$,
$b \in L^q [0,T]$, and
$c \in L^r [0,T]$
with
$ p\in[2,\infty]$,
$ q\in(1,\infty]$,
$ r\in[1,\infty]$,
so that
$1/p + 1/r \ge \frac12$ and
$1/p + 1/q + 1/r < \frac32$.
Then the stochastic process
$X$ defined by \eqref{eq:volterra1}
has a modification satisfying H\"older condition
up to order
$\frac32 - 1/p - 1/q - 1/r$.
\end{lemma}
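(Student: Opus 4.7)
I plan to prove the lemma via Kolmogorov's continuity criterion applied to high-order Gaussian moments of $X_{t_2}-X_{t_1}$. Since these increments are centered Gaussian, it suffices to produce a bound of the form $\|K(t_2,\cdot)-K(t_1,\cdot)\|_2 \le C\,|t_2-t_1|^{\beta}$ for some $\beta$ that can be taken arbitrarily close to $\alpha := \frac{3}{2} - 1/p - 1/q - 1/r$; then $\ex{|X_{t_2}-X_{t_1}|^{2n}} \le c_n C^{2n}|t_2-t_1|^{2n\beta}$, and taking $n$ large enough in Kolmogorov's criterion yields a H\"older modification of every order $<\beta-1/(2n)$, hence of every order $<\alpha$.

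\textbf{Key idea.} The argument in Theorem~\ref{thm:lem_XEcontin} already produces $\|K(t_2,\cdot)-K(t_1,\cdot)\|_2 \le C\,\|b\,\indicatorFun_{(t_1,t_2]}\|_q$, but for a general $b\in L^q$ this factor need not decay with $|t_2-t_1|$. To extract a H\"older factor, I would rerun the same chain of Young's inequality followed by generalized H\"older followed by the embedding $L^{t}[0,t_2]\hookrightarrow L^2[0,t_2]$, but with the exponent $q$ replaced by a strictly smaller auxiliary exponent $q_0$, and then apply the classical H\"older inequality
\[
\|b\,\indicatorFun_{(t_1,t_2]}\|_{q_0} \le \|b\|_q\,(t_2-t_1)^{1/q_0 - 1/q}.
\]
The whole chain then gives $\|K(t_2,\cdot)-K(t_1,\cdot)\|_2 \le C(t_2-t_1)^{1/q_0 - 1/q}$ for a constant depending on $T$, $\|a\|_p$, $\|b\|_q$, $\|c\|_r$, and the choice of $q_0$.

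\textbf{Choice of $q_0$.} For a prescribed $\gamma\in(0,\alpha)$, I would select $q_0\in[1,q)$ with $1/q_0 - 1/q > \gamma$ subject to the two constraints $1/q_0 \ge 1 - 1/r$ (needed for Young's inequality) and $1/p + 1/q_0 + 1/r \le 3/2$ (so that the exponent $t$ produced by the chain satisfies $t\ge 2$, allowing the embedding into $L^2$). Such a $q_0$ exists precisely because of the hypotheses of the lemma: $1/p + 1/r \ge 1/2$ forces $3/2 - 1/p - 1/r \le 1$, so the upper bound on $1/q_0$ is achievable within $[1,\infty)$; $p\ge 2$ gives $3/2 - 1/p - 1/r \ge 1 - 1/r$, making the two constraints consistent; and the strict inequality $1/p + 1/q + 1/r < 3/2$ leaves positive slack to push $1/q_0 - 1/q$ past any $\gamma<\alpha$.

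\textbf{Main obstacle.} The only substantive point beyond Theorem~\ref{thm:lem_XEcontin} is the insertion of the auxiliary exponent $q_0$ and the verification that the three constraints $q_0<q$, $1/q_0 + 1/r\ge 1$, $1/p + 1/q_0 + 1/r\le 3/2$ can all be met simultaneously; this is precisely where the hypothesis $1/p + 1/r\ge 1/2$ (together with $p\ge 2$) is used. Once $q_0$ is fixed, the estimate is obtained by repeating the computations of Theorem~\ref{thm:lem_XEcontin} with $q$ replaced by $q_0$, absorbing the extra $t_2^{3/2 - 1/p - 1/q_0 - 1/r}$ factor into a constant depending only on $T$.
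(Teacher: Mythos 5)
Your proposal is correct and is essentially the paper's own argument: the paper likewise writes $K(t_2,\cdot)-K(t_1,\cdot)=a\,(b\,\indicatorFun_{(t_1,t_2]}*\tilde c)$ and extracts the H\"older factor by measuring $b\,\indicatorFun_{(t_1,t_2]}$ in an auxiliary exponent $q_0<q$ (via the non-conjugate H\"older inequality with $\indicatorFun_{(t_1,t_2]}$) before running the same Young--H\"older chain. The only difference is that the paper takes the extremal admissible value $1/q_0=\frac32-1/p-1/r$, so the chain lands exactly in $L^2$ with exponent exactly $\frac32-1/p-1/q-1/r$ and a single application of Corollary~\ref{cor:Gausscontp} finishes the proof, whereas your sub-extremal $q_0$ costs an extra $L^{(1/p+1/q_0+1/r-1)^{-1}}\hookrightarrow L^2$ embedding and a final passage $\gamma\uparrow\frac32-1/p-1/q-1/r$ with a (standard) gluing of the resulting modifications.
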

\begin{remark} As it was mentioned in Remark \ref{rem1}, in the case of fractional Brownian motion, for any small positive $\varepsilon$, we have chose $p$, $q$ and $r$ so that $1\le 1/p + 1/q+1/r \le 1+\varepsilon$. Therefore in conditions of  Lemma \ref{lem:pabc_Holder} we get for fBm  H\"{o}lder property only up to order $1/2$ while in reality we know H\"{o}lder property   up to order $H>1/2$.
\end{remark}
\begin{proof}
Extend the functions $a$, $b$, $c$ and $K(t,s)$
as it was done in the proof of Theorem~\ref{thm:lem_XEcontin}.
Let $0 \le t_1 < t_2 \le T$.
We are going to find an upper bound for $\|K(t_2,\,\cdot\,) -  K(t_1,\,\cdot\,)\|_2$
using a representation \eqref{eq:difKt2sKt1s}.

By H\"older inequality for non-conjugate exponents \eqref{neq:Holder},
\[
\| b \indicatorFun_{(t_1,t_2]} \|_{ (\frac32 - 1/p - 1/r)^{-1}} \le
\|b\|_q \, \|\indicatorFun_{(t_1,t_2]}\|_{ (\frac32 - 1/p - 1/q - 1/r)^{-1}}
=
\|b\|_q (t_2 - t_1)^{\frac32 - 1/p - 1/q - 1/r} .
\]
Here we use that
$1/p + 1/q + 1/r \le \frac32$.
By Young's convolution inequality \eqref{neq:Young},
\begin{align*}
\|b \indicatorFun_{[t_1,t_2]} * \tilde c\|_{ (\frac12 - 1/p)^{-1}} &\le
\| b \indicatorFun_{[t_1,t_2]} \|_{(\frac32 - 1/p - 1/r)^{-1}}
\| \tilde c \|_r \\
&\le
\|b\|_q \|c\|_r
(t_2 - t_1)^{\frac32 - 1/p - 1/q - 1/r} .
\end{align*}
Here $\tilde c(v) = c(-v)$;
we used inequalities
$r\ge 1$,
$\frac12 \le 1/p + 1/r < \frac32$
so $(\frac32 - 1/p - 1/r)^{-1} \ge 1$, and
$p \ge 2$, so
 $(\frac12 - 1/p)^{-1} \ge 2$.

Again, by H\"older inequality for non-conjugate exponents,
\begin{align}
\|K(t_2,\,\cdot\,) - K(t_1,\,\cdot\,)\|_2
&=
\|a\,(b \indicatorFun_{(t_1,t_2]} * \tilde c)\|_2
\le
\|a\|_p\,
\|b \indicatorFun_{(t_1,t_2]} * \tilde c\|_{1/(\frac12 - 1/p)}
\nonumber \\ & \le
\|a\|_p \, \|b\|_q \, \|c\|_r \,
(t_2 - t_1)^{\frac32 - 1/p - 1/q - 1/r} .
\label{neq:Kt2-Kt1}
\end{align}

Hence
\begin{align*}
\ex{ (X_{t_2} - X_{t_1})^2} &=
\|K(t_2,\,\cdot\,) - K(t_1,\,\cdot\,)\|_2^2
\\ &\le
\|a\|^2_p\, \|b\|^2_q\, \|c\|^2_r\,
(t_2 - t_1)^{3 - 2(1/p + 1/q + 1/r)}.
\end{align*}
By Corollary~\ref{cor:Gausscontp},
the process $\{X_t,\; t\in[0,T]\}$
has a modification that satisfies
H\"older condition up to order  $\frac32 - 1/p - 1/q - 1/r$.
\end{proof}
The following statement  follows, to some extent, from
Lemma~\ref{lem:pabc_Holder}. Now we drop the condition
$1/p + 1/r \ge \frac12$, and simultaneously  relax the
assertion of the mentioned lemma.

\begin{theorem}\label{thm:cor:cpabc_Holder}
Let
$a \in L^{p} [0,T]$,
$b \in L^{q} [0,T]$, and
$c \in L^{r} [0,T]$
with
$p \in [2, \infty]$,
$q \in (1, \infty]$, and
$r \in [1, \infty]$,
which satisfy the inequality
$1/p + 1/q + 1/r < \frac32$.
Then the stochastic process
$X$ defined in \eqref{eq:volterra1}
has a modification that satisfies H\"older condition
up to order
$\frac32 - 1/q - \max(\frac12, \: 1/p + 1/r)$.
\end{theorem}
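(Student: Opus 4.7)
The plan is to reduce the statement to Lemma \ref{lem:pabc_Holder} by splitting into two cases according to whether the side condition $1/p + 1/r \ge \tfrac12$ from that lemma holds. The underlying observation is that $\max(\tfrac12,\,1/p+1/r)$ already equals $1/p+1/r$ on the regime where the lemma applies, so nothing new is needed there; on the complementary regime, one can artificially enlarge a reciprocal integrability exponent to land on the boundary $1/p+1/r=\tfrac12$, paying the (already advertised) penalty in the H\"older exponent.

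In the case $1/p + 1/r \ge \tfrac12$, the hypotheses of Lemma \ref{lem:pabc_Holder} are met verbatim (the other requirement $1/p+1/q+1/r<\tfrac32$ is assumed), so the lemma produces a modification with H\"older order up to $\tfrac32 - 1/p - 1/q - 1/r = \tfrac32 - 1/q - \max(\tfrac12,\,1/p+1/r)$, which is exactly the claimed bound.

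In the case $1/p + 1/r < \tfrac12$, note first that $1/r < \tfrac12$, so $r>2$ and in particular $r>1$. I would exploit the fact that $[0,T]$ has finite measure, hence $L^{r}[0,T] \subset L^{r'}[0,T]$ for every $r'\le r$. Defining $r' \in [2,\infty]$ by $1/r' := \tfrac12 - 1/p$, the choice is legitimate because $p\ge 2$ forces $1/r' \in [0,\tfrac12]$, and the inequality $r'\le r$ is equivalent to $1/p + 1/r \le \tfrac12$, which holds. Consequently $c \in L^{r'}[0,T]$. The modified triple $(p,q,r')$ satisfies $1/p + 1/r' = \tfrac12$ and $1/p + 1/q + 1/r' = \tfrac12 + 1/q < \tfrac32$ thanks to $q>1$, so Lemma \ref{lem:pabc_Holder} applies and delivers a H\"older modification with exponent up to $\tfrac32 - 1/p - 1/q - 1/r' = 1 - 1/q$. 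Since $\max(\tfrac12,\,1/p+1/r)=\tfrac12$ in this regime, this matches $\tfrac32 - 1/q - \max(\tfrac12,\,1/p+1/r)$ exactly.

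I do not foresee any genuine obstacle: the whole proof is a one-line reduction, and the only point requiring care is verifying that the substitute exponent $r'$ is simultaneously $\ge 1$, $\le r$, and placed so that $1/p+1/r'$ lies on the boundary $\tfrac12$; these are immediate from $p\ge 2$ and the defining inequality $1/p+1/r<\tfrac12$. The use of $q>1$ (rather than $q\ge 1$) is essential to keep $1/p+1/q+1/r'<\tfrac32$, and it is precisely the reason this hypothesis appears in the statement.
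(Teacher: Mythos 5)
Your proposal is correct and is essentially the paper's own argument: the paper simply unifies your two cases by setting $r' = \left(\max(1/r,\: \frac12 - 1/p)\right)^{-1}$ in one stroke and then invoking Lemma~\ref{lem:pabc_Holder}, which reduces to $r'=r$ in your first case and to $1/r'=\frac12-1/p$ in your second. All the verifications you flag (that $r'\le r$, $r'\ge 1$, and $1/p+1/q+1/r'<\frac32$ via $q>1$) are exactly the ones the paper performs.
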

\begin{remark}
For the fBm with Hurst index $H\in\bigl(\frac12, 1\bigr)$
and functions $a$, $b$ and $c$ and exponents
$p$, $q$ and $r$ defined in Remark~\ref{rem1},
Theorem~\ref{thm:cor:cpabc_Holder} provides
H\"older condition up to order
$\frac32 - \frac{\epsilon}{3} -
\max\bigl(\frac12, \: 1 +  \frac{2\epsilon}{3}
\bigr) = \frac12 - \epsilon$.
However, since conditions of Lemma~\ref{lem:pabc_Holder} holds true in this case,
Lemma~\ref{lem:pabc_Holder} gives the same result.
\end{remark}
\begin{proof}
Let $r' = \left(\max(1/r,\: \frac12 - 1/p)\right)^{-1}$.
Then $r' \in [1, + \infty]$,
$r' \le r$, $c \in L^{r'}[0,T]$,
$1/p + 1/r' \ge \frac12$,
$1/p + 1/q + 1/r' < \frac32$.
Applying Lemma~\ref{lem:pabc_Holder} to the  functions $a$, $b$, $c$
and exponents $p$, $q$ and $r'$,
we obtain   that the process $X$ has a modification that satisfies
H\"older condition up to order
$\frac32 - 1/p - 1/q - 1/r' = \frac32 - 1/q - \max(\frac12, \: 1/p + 1/r)$.
 \end{proof}

Now,  let us formulate stronger  conditions on the functions $a$, $b$ and $c$, supplying better H\"{o}lder properties on any interval, ``close'' to $[0,T]$, but not on the whole $[0,T]$.

\begin{theorem}\label{thm:lem:HolderBi}
Let $t_1 \ge 0$, $t_2 \ge 0$ and $t_1 + t_2 < T$.
Let the functions $a$, $b$ and $c$
and constants $p$, $p_1$, $q$, $q_1$,
$r$, and $r_1$  satisfy the following assumptions
\begin{align*}
a&\in L^p[0,T]\cap L^{p_1}[t_1,T],\; where\;
 2 \le p \le p_1;
\\
b&\in L^q[0,T]\cap L^{q_1}[t_1+t_2,T],\; where\;
 1 < q \le q_1;
\\
c&\in L^r[0,T]\cap  L^{r_1}[t_2,T],\; where\;
 1 \le r \le r_1.
 \end{align*}
Also, let $1/p+1/q+1/r \le \frac32$, and
$1/q_1 +  \max\left(\frac12, \: 1/p + 1/r_1, \:  1/p_1 + 1/r\right) < \frac32$.

Then the stochastic process
$\{X_t,\; t\in[t_1+t_2,\:T]\}$
has a modification that satisfies H\"older condition
up to order
$\frac32 - 1/q_1 - \max\!\left(\frac12, \: 1/p+1/r_1, \: 1/p_1 + 1/r\right)$.
\end{theorem}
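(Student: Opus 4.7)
The plan is to combine the exponent-reduction trick of Theorem~\ref{thm:cor:cpabc_Holder} with an additional splitting of the $s$-integration at the point $t_1$, so that on each piece we can use whichever stronger integrability assumption is effective. Fix $t_1+t_2\le t<t'\le T$. As in \eqref{eq:difKt2sKt1s},
\begin{equation*}
K(t',s) - K(t,s) = a(s)\,(b\indicatorFun_{(t,t']}*\tilde c)(s),\qquad s\in\mathbb{R},\ \tilde c(v)=c(-v),
\end{equation*}
and since this difference vanishes for $s>t'$ I would split
\begin{equation*}
\|K(t',\cdot)-K(t,\cdot)\|_2^2 = \|K(t',\cdot)-K(t,\cdot)\|_{L^2[0,t_1]}^2 + \|K(t',\cdot)-K(t,\cdot)\|_{L^2[t_1,t']}^2,
\end{equation*}
and bound the two pieces separately.

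On the first piece ($s\in[0,t_1]$), for every $u\in(t,t']$ one has $u\ge t\ge t_1+t_2$ and $u-s\ge t-t_1\ge t_2$; hence in this range one may replace $b$ by $b\indicatorFun_{[t_1+t_2,T]}$ and $c$ by $c\indicatorFun_{[t_2,T]}$ in the convolution without altering its values on $[0,t_1]$, which unlocks the stronger norms $\|b\|_{L^{q_1}[t_1+t_2,T]}$ and $\|c\|_{L^{r_1}[t_2,T]}$. I would then run the H\"older--Young--H\"older chain from the proof of Lemma~\ref{lem:pabc_Holder} verbatim with exponents $(p,q_1,r_1)$, first replacing $r_1$ by $r_1''=\bigl(\max(1/r_1,\tfrac12-1/p)\bigr)^{-1}\le r_1$ exactly as in the proof of Theorem~\ref{thm:cor:cpabc_Holder} should $1/p+1/r_1<\tfrac12$. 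This produces
\begin{equation*}
\|K(t',\cdot)-K(t,\cdot)\|_{L^2[0,t_1]} \le C_1\,(t'-t)^{\alpha_1},\qquad \alpha_1 = \tfrac32-1/q_1-\max\bigl(\tfrac12,\,1/p+1/r_1\bigr).
\end{equation*}

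On the second piece ($s\in[t_1,t']$) the argument $u-s$ of $c$ may be arbitrarily small, so one keeps $c$ on $[0,T]$ and uses the full norm $\|c\|_{L^r[0,T]}$; in compensation $a$ is restricted to $[t_1,T]$, where the stronger norm $\|a\|_{L^{p_1}[t_1,T]}$ is available, while $b$ is still supported in $[t_1+t_2,T]$. The same three-step chain with exponents $(p_1,q_1,r)$---and the symmetric reduction $r\mapsto r''=\bigl(\max(1/r,\tfrac12-1/p_1)\bigr)^{-1}$ should $1/p_1+1/r<\tfrac12$---yields
\begin{equation*}
\|K(t',\cdot)-K(t,\cdot)\|_{L^2[t_1,t']} \le C_2\,(t'-t)^{\alpha_2},\qquad \alpha_2 = \tfrac32-1/q_1-\max\bigl(\tfrac12,\,1/p_1+1/r\bigr).
\end{equation*}
Summing the two estimates gives $\ex{(X_{t'}-X_t)^2}\le C\,(t'-t)^{2\alpha}$ on $[t_1+t_2,T]$ with $\alpha=\min(\alpha_1,\alpha_2)=\tfrac32-1/q_1-\max(\tfrac12,\,1/p+1/r_1,\,1/p_1+1/r)$, and Corollary~\ref{cor:Gausscontp} applied to the centered Gaussian process $\{X_t,\,t\in[t_1+t_2,T]\}$ produces the claimed H\"older modification.

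The main nuisance is bookkeeping: one has to verify that the truncated functions really lie in the relevant $L$-spaces, and that the reductions $r_1\mapsto r_1''$ and $r\mapsto r''$ are compatible with the hypothesis $1/q_1+\max(\tfrac12,\,1/p+1/r_1,\,1/p_1+1/r)<\tfrac32$, which is precisely what guarantees $1/p+1/q_1+1/r_1''<\tfrac32$ and $1/p_1+1/q_1+1/r''<\tfrac32$ so that both exponents $\alpha_1,\alpha_2$ remain positive. Existence of $X$ on all of $[0,T]$ is delivered separately by the companion condition $1/p+1/q+1/r\le\tfrac32$ via Theorem~\ref{thm:lem_XEcontin}, so no new existence argument is required.
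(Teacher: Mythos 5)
Your proposal is correct and follows essentially the same route as the paper: the paper likewise splits the increment at $s=t_1$ (writing $a=a_1+a_2$ with $a_1=a\indicatorFun_{[0,t_1)}$, $a_2=a\indicatorFun_{[t_1,T]}$, and replacing $b,c$ by their truncations $b\indicatorFun_{[t_1+t_2,T]}$, $c\indicatorFun_{[t_2,T]}$ where the geometry $u-s\ge t_2$ permits), applies the estimate \eqref{neq:Kt2-Kt1} of Lemma~\ref{lem:pabc_Holder} with exponents $(p,q_1,(\max(1/r_1,\frac12-1/p))^{-1})$ and $(p_1,q_1,(\max(1/r,\frac12-1/p_1))^{-1})$ to the two pieces, and concludes with the minimum of the two exponents via Corollary~\ref{cor:Gausscontp}. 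The only cosmetic difference is that you split the squared $L^2$-norm over the two subintervals while the paper uses the triangle inequality on the sum of the two disjointly supported terms; these are equivalent.
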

\begin{remark}
Consider the fBm
with Hurst index $H \in \bigl(\frac12, 1\bigr)$ on interval $[0, T]$.
Define the functions $a$, $b$ and $c$ and exponents $p$, $q$ and $r$ as
it is done in Remark~\ref{rem1}.
Let $p_1 = q_1 = r_1 = 3/\epsilon$,
where $\epsilon$ comes from Remark~\ref{rem1},
and let $t_1 = t_2 = t_0/2$ for some
$t_0 \in (0, T)$.
Then the conditions of Theorem~\ref{thm:lem:HolderBi}
are satisfied, and, according to Theorem~\ref{thm:lem:HolderBi}
the fBm has a modification which
satisfies H\"older condition
in the interval $[t_0,\: T]$ up to order
$\frac32 - \frac{\epsilon}{3}
- \max\bigl(\frac12, \:\allowbreak
\frac32 - H + \frac{2\epsilon}{3}, \:\allowbreak
H - \frac12 + \frac{2\epsilon}{3}\bigr)
= H - \epsilon$.
This is equivalent to the fact that the fBm
satisfies H\"older condition in the interval
$[t_0, T]$ up to order $H$.
\end{remark}

\begin{proof}
Let us extend  the function $a(s)$, $b(s)$, $c(s)$ and $K(t,s)$
as it was done in the proof of Theorem~\ref{thm:lem_XEcontin}.
With this extension, \eqref{eq:abckernel} holds true
for all $t\in[0,T]$ and $s\in\mathbb{R}$.
Denote
\begin{align*}
a_1(s) &= a(s) \indicatorFun_{[0,t_1)}, &  b_1(s) &= b(s) \indicatorFun_{[t_1+t_2,\:T]}, \\
a_2(s) &= a(s) \indicatorFun_{[t_1,T]}, &  c_1(s) &= c(s) \indicatorFun_{[t_2,\:T]},
\\
\tilde c(s) &= c(-s), &
\tilde c_1(s) &= c_1(-s) = c(-s) \indicatorFun_{[-T,\:{-}t_2]}(s) .
\end{align*}

The process $\{X_t, \; t\in[0,T]\}$ is well-defined
according to Theorem~\ref{thm:lem_XEcontin}.
We consider the increments of the process
$\{X_t, \; t\in[t_1 + t_2,\:T]\}$.
Let $t_3$ and $t_4$ be such that $t_1+t_2 \le t_3 < t_4 < T$.
Then
\begin{align*}
K(t_4,s) - K(t_3,s) &= a(s) \int_{t_3}^{t_4} b(u) c(u-s) \, du
= a(s) \int_{t_3}^{t_4} b_1(u) c(u-s) \, du\\
&\hspace{6.2cm} \mbox{for all $s\in\mathbb{R}$}; \\
K(t_4,s) - K(t_3,s)
&= a_1(s) \int_{t_3}^{t_4} b_1(u) c_1(u-s) \, du
\quad \mbox{for} \quad 0\le s < t_1; \\
K(t_4,s) - K(t_3,s) &= a_2(s) \int_{t_3}^{t_4} b_1(u) c(u-s) \, du
\quad \mbox{for} \quad t_1 \le s \le  T.
\end{align*}
Thus, for all $s\in\mathbb{R}$
\begin{align*}
K(t_4,s) - K(t_3,s)
&= a_1(s) \int_{t_3}^{t_4} b_1(u) c_1(u-s) \, du
+ a_2(s) \int_{t_3}^{t_4} b_1(u) c(u-s) \, du
\\ &=
a_1(s) \, (b_1 \indicatorFun_{(t_3,t_4]} * \tilde c_1) (s)
       + a_2(s) \, (b_1 \indicatorFun_{(t_3,t_4]} * \tilde c) (s).
\end{align*}
Functions $a_1$, $b_1$ and $c_1$
with exponents $p$, $q_1$ and $\left(\max(1/r_1,\:\frac12 - 1/p)\right)^{-1}$
satisfy conditions of Lemma~\ref{lem:pabc_Holder}.
Functions $a_2$, $b_1$ and $c$
with exponents $p_1$, $q_1$ and $\left(\max(1/r,\:\frac12 - 1/p_1)\right)^{-1}$
also satisfy conditions of Lemma~\ref{lem:pabc_Holder}.
By inequality \eqref{neq:Kt2-Kt1} in the proof of Lemma~\ref{lem:pabc_Holder},
\begin{gather*}
\|a_1 \, (b_1 \indicatorFun_{(t_3,t_4]} * \tilde c_1)\|_2 \le
\|a_1\|_p \, \|b_1\|_{q_1} \,
\|c_1\|_{1/{\max(1/r_1,\:\frac12 - 1/p)}} \,
(t_4 - t_3)^{\lambda_1}, \\
\|a_2 \, (b_1 \indicatorFun_{(t_3,t_4]} * \tilde c)\|_2 \le
\|a_2\|_{p_1} \, \|b_1\|_{q_1} \,
\|c\|_{1/{\max(1/r,\:\frac12 - 1/p_1)}} \,
(t_4 - t_3)^{\lambda_2}
\end{gather*}
where
\begin{gather*}
\lambda_1 = \frac32 - \frac{1}{p} - \frac{1}{q_1} - \max\!\left(\frac{1}{r_1},\:\frac12 - \frac{1}{p}\right)
= \frac32 - \frac{1}{q_1} - \max\!\left(\frac12, \: \frac{1}{p} + \frac{1}{r_1}\right), \\
\lambda_2 = \frac32 - \frac{1}{p_1} - \frac{1}{q_1} - \max\!\left(\frac{1}{r},\:\frac12 - \frac{1}{p_1}\right)
= \frac32- \frac{1}{q_1} - \max\!\left(\frac12, \: \frac{1}{p_1} + \frac{1}{r}\right).
\end{gather*}
Denote
\[
\lambda = \min(\lambda_1, \lambda_2) =
\frac32- \frac{1}{q_1} - \max\!\left(\frac12, \: \frac{1}{p} + \frac{1}{r_1}, \: \frac{1}{p_1} + \frac{1}{r}\right).
\]
Then
\[
\|K(t_4,\,\cdot\,) - K(t_3,\,\cdot\,)\|_2 \le
\|a_1 \, (b_1 \indicatorFun_{(t_3,t_4]} * \tilde c_1)\|_2 +
\|a_2 \, (b_1 \indicatorFun_{(t_3,t_4]} * \tilde c)\|_2
\le C \, (t_4 - t_3)^\lambda,
\]
where
\begin{align*}
C &=
\|a_1\|_p \, \|b_1\|_{q_1} \,
\|c_1\|_{1/{\max(r_1,\:\frac12 - 1/p)}} \,
T^{\lambda_1-\lambda} \\ &\quad+
\|a_2\|_{p_1} \, \|b_1\|_{q_1} \,
\|c\|_{1/{\max(r,\:\frac12 - 1/p_1)}} \,
T^{\lambda_2-\lambda}.
\end{align*}
Finally,
\begin{align*}
\ex{\bprod{X_{t_4}-X_{t_3}}^2} &\le
\int_{t_3}^{t_4} (K(t_4,s) - K(t_3,s))^2 \, ds
\\ &=
\|K(t_4,\,\cdot\,) - K(t_3,\,\cdot\,)\|_2^2 \le
C^2 (t_4-t_3)^{2\lambda} .
\end{align*}
By Corollary~\ref{cor:Gausscontp}, the stochastic process
$\{X_t,\; t\in[t_1+t_2,\:T]\}$
has a modification that satisfies H\"older condition
up to order  $\lambda$.
\end{proof}

The next result, namely, Lemma  \ref{lem:cor:c2pabc_Holder},  generalizes Lemma~\ref{lem:pabc_Holder}
and Theorem~\ref{thm:cor:cpabc_Holder}.  It allows us to apply  the mentioned lemma directly
 to the power functions   $a(s) = s^{-1/p_0}$ and
 $c(s) = s^{-1/r_0}$.

\begin{lemma}\label{lem:cor:c2pabc_Holder}
 Let $p_0 \in (0,{+}\infty]$,
$q_0 \in (1,{+}\infty]$, $r_0 \in (0,{+}\infty]$ with
$1/p_0 + 1/q_0 + 1/r_0 < \frac32$.
Also, for any $p\in(0,p_0)$ let $a \in L^{\max(2, p)} [0,T]$,
for any $q\in[1,q_0)$ let $b \in L^q [0,T]$,  and for any $r\in(0,r_0)$ let $c \in L^{\max(1, r)} [0,T]$.

Then the stochastic process
$X$ defined in \eqref{eq:volterra1}
has a modification that satisfies H\"older condition up to order
$\lambda = \frac32 - 1/q_0 - \max(\frac12, \: 1/p_0 + 1/r_0)$.
\end{lemma}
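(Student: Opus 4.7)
This lemma is derived from Theorem~\ref{thm:cor:cpabc_Holder} by a continuity/approximation argument. Fixing any $\alpha < \lambda$, I will produce a modification of $X$ that is H\"older of order $\alpha$ by applying Theorem~\ref{thm:cor:cpabc_Holder} with exponents slightly below the thresholds $p_0$, $q_0$, $r_0$. Since any two continuous modifications of the Gaussian process $X$ are indistinguishable, letting $\alpha_n \nearrow \lambda$ along a countable sequence yields the full H\"older assertion on a single modification.

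For the choice of exponents, the strict inequality $1/p_0 + 1/q_0 + 1/r_0 < 3/2$ and the continuity of the map $(p,q,r)\mapsto \tfrac{3}{2} - 1/q - \max(\tfrac{1}{2}, 1/p + 1/r)$ at $(p_0,q_0,r_0)$ (with the conventions $1/\infty = 0$ and $1/0 = \infty$) allow one to pick $p \in (0,p_0)$, $q \in (1, q_0)$, $r \in (0, r_0)$ with
\[
1/p + 1/q + 1/r < \tfrac{3}{2}, \qquad \tfrac{3}{2} - 1/q - \max(\tfrac{1}{2}, 1/p + 1/r) > \alpha.
\]
Setting $\tilde p = \max(2,p)$ and $\tilde r = \max(1,r)$, by hypothesis $a \in L^{\tilde p}[0,T]$, $b \in L^q[0,T]$ and $c \in L^{\tilde r}[0,T]$; moreover $(\tilde p, q, \tilde r) \in [2,\infty]\times(1,\infty]\times[1,\infty]$ with $1/\tilde p + 1/q + 1/\tilde r \le 1/p + 1/q + 1/r < 3/2$.

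Apply Theorem~\ref{thm:cor:cpabc_Holder} to the exponents $(\tilde p, q, \tilde r)$: $X$ admits a modification which is H\"older of every order below $\tfrac{3}{2} - 1/q - \max(\tfrac{1}{2}, 1/\tilde p + 1/\tilde r)$. Since $1/\tilde p \le 1/p$ and $1/\tilde r \le 1/r$, this bound is at least $\tfrac{3}{2} - 1/q - \max(\tfrac{1}{2}, 1/p + 1/r) > \alpha$, so the modification is in particular H\"older of order $\alpha$.

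The argument is essentially bookkeeping; the only mild subtlety lies in the edge cases $p_0 < 2$ or $r_0 < 1$, where the substitution $p \mapsto \tilde p = 2$ or $r \mapsto \tilde r = 1$ produces a strictly better H\"older exponent than $\lambda$, but this only strengthens the conclusion, so the stated $\lambda$ remains a valid lower bound.
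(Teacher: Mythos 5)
Your proposal is correct and follows essentially the same route as the paper: perturb the exponents slightly inside the admissible open region, invoke Theorem~\ref{thm:cor:cpabc_Holder} (the paper does the same thing, with the $\min(\frac12,\cdot)$, $\min(1,\cdot)$ adjustments written out explicitly and an explicit sequence of exponents indexed by large integers), and then glue the countably many modifications on the almost-sure event where they all coincide. The only cosmetic difference is that you argue the existence of suitable exponents by continuity rather than exhibiting them, and your gluing step is stated more tersely than the paper's explicit event $B$, but the content is identical.
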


\begin{remark}
    In Remark~\ref{rem1} we applied
    Lemma~\ref{lem:pabc_Holder} and obtained
    that the fBm with Hurst index $H > \frac12$ has a modification
    that satisfies H\"older condition up to order $\frac12$.
    With Lemma~\ref{lem:cor:c2pabc_Holder},
    we can obtain the same result more easily.
    We just apply Lemma~\ref{lem:cor:c2pabc_Holder}
    for $p_0 = \left(H-\frac12\right)^{\!-1}\!$,
    $q_0 = \infty$ and $r_0 = \left(\frac32 - H\right)^{\!-1}$
    and do not bother with $\epsilon$.
\end{remark}
\begin{proof}
Notice that $0 < \lambda \le 1$.
Denote $A = \left\{m \in \mathbb{N} \, : \,
 m > \max\bigl(3, \frac{\lambda q_0}{q_0-1}\bigr)\right\}$
a set of ``large enough'' positive integers.

Let $n \in A$.
Let $p_n$, $q_n$ and $r_n$ be such real numbers that
$1/p_n = \min(\frac12, \: 1/p_0 + \lambda/n)$,
$1/q_n = 1/q_0 + \lambda/n$,
and $1/r_n = \min(1, \: 1/r_0 + \lambda/n)$.
Then $p_n \in \bigl[\frac12, \infty\bigr)$,
$q_n \in (1,\infty)$,
$r_n \in [1,\infty)$,
and $1/p_n + 1/q_n + 1/r_n < \frac32$.
Apply Lemma~\ref{lem:pabc_Holder} for functions $a$, $b$, $c$
and exponents $p_n$, $q_n$ and $r_n$.
By Lemma~\ref{lem:pabc_Holder}, the process $X$ has a modification
$X^{(n)}$ that satisfies Holder condition up to order
$\frac32 - 1/q_n - \max(\frac12,\: 1/p_n + 1/r_n)
\ge (n-3) \lambda / n$.

For different $n\in A$, the processes $X^{(n)}$ coincide almost surely on $[0,T]$.
Let $B$ be a random event which occurs when
all these processes coincide:
\[
B = \{\forall m\mathbin{\in} A \;\,
\forall n\mathbin{\in} A \;\,
\forall t\mathbin{\in}[0,T] \; : \;
X^{(m)}_t = X^{(n)}_t\}.
\]
Then $\pr(B) = 1$, and $\widetilde X = X^{(k)} \indicatorFun_B$
(where $k = \min A$ is the least element
of the set $A$)
is a modification of $X$ that satisfies
H\"older condition
up to order
$\lambda$.
\end{proof}

\begin{lemma}\label{lem:Hoecumno}
Let
$a\in L^p[0,T]$,
$b\in L^q[0,T]$,
$c\in L^r[0,T]$,
where the exponents satisfy
relations
$ p\in[2, \infty] $,
$ q\in[1, \infty) $,
$ r\in[1, \infty] $,
and $1/p + 1/q + 1/r \le \frac32$.
Let there exist $\lambda>0$   and $C\in\mathbb{R}$ such that
\[\forall t\in[0,T]\; : \;
0 \le \|b\indicatorFun_{[0,t]}\!\|_q \le C t^\lambda .
\]
Then the stochastic process
$\{X_t,\allowbreak\; t\in[0,T]\}$ has a modification
which is continuous on $[0,T]$ and satisfies
H\"older condition at point 0
up to order $\lambda$.
\end{lemma}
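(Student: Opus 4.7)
Since $q<\infty$, the hypothesis $1/p+1/q+1/r\le\tfrac32$ forces $1/p+1/r\le\tfrac32-1/q<\tfrac32$, so Theorem~\ref{thm:lem_XEcontin} already produces a continuous modification of $X$ on $[0,T]$; I work with this modification throughout and note that $X_0=0$, so the H\"older statement at $0$ reduces to bounding $|X_t|$ for small $t$.

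The first substantive step is an increment bound. Extending $a,b,c$ and $K$ by zero as in the proof of Theorem~\ref{thm:lem_XEcontin} one has
\[
K(t,\cdot)-K(s,\cdot)=a(\cdot)\,\bigl(b\,\indicatorFun_{(s,t]}*\tilde c\bigr)(\cdot).
\]
After first replacing $r$ (or $q$) by a slightly smaller exponent if needed to guarantee $1/q+1/r\ge 1$, which is admissible since $T<\infty$, Young's convolution inequality, H\"older's inequality for non-conjugate exponents and the embedding of $L^{(1/p+1/q+1/r-1)^{-1}}[0,T]$ into $L^2[0,T]$ combine, exactly as in \eqref{eq:tempbi0t}--\eqref{neq:tempKtd}, to yield
\[
\ex{(X_t-X_s)^2}\;\le\;C_0^{\,2}\,\|b\indicatorFun_{(s,t]}\|_q^{\,2}\;=\;C_0^{\,2}\bigl(G(t)-G(s)\bigr)^{2/q},\qquad 0\le s\le t\le T,
\]
with $C_0=T^{3/2-1/p-1/q-1/r}\|a\|_p\|c\|_r$ and $G(u)=\int_0^u|b(v)|^q\,dv$ nondecreasing. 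The hypothesis rewrites as $G(t)\le C^q t^{q\lambda}$, and taking $s=0$ gives the baseline $\ex{X_t^2}\le C_0^{\,2}C^{2}t^{2\lambda}$.

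The remaining step is to upgrade this moment bound into a pointwise almost sure H\"older estimate at the origin. Fix any $\lambda'\in(0,\lambda)$ and set $t_n:=T\,2^{-n}$. A Gaussian tail bound gives
\[
\pr\bigl(|X_{t_n}|>t_n^{\lambda'}\bigr)\;\le\;2\exp\!\bigl(-t_n^{2(\lambda'-\lambda)}/(2C_0^{\,2}C^{2})\bigr),
\]
summable in $n$, so by Borel--Cantelli $|X_{t_n}|\le t_n^{\lambda'}$ for all $n$ large enough, almost surely. To interpolate across $t\in[t_{n+1},t_n]$ I would apply Borell--TIS (or Dudley's entropy integral) to the centered Gaussian family $\{X_t-X_{t_n}\}_{t\in[t_{n+1},t_n]}$, whose intrinsic $L^2$-diameter is dominated by $C_0 C\,t_n^\lambda$ thanks to the increment bound above; the geometric decay of these diameters in $n$ lets a second Borel--Cantelli collapse the dyadic rings, giving $|X_t|\le K(\omega)\,t^{\lambda'}$ for all sufficiently small $t$. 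The most likely streamlined presentation by the authors is to invoke an appendix lemma in the spirit of Lemma~\ref{lem:GContF}, but tailored to H\"older-at-a-point, that derives the conclusion directly from $\ex{(X_t-X_s)^2}\le C_0^{\,2}(G(t)-G(s))^{2/q}$ together with $G(t)\le C^q t^{q\lambda}$.

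The main obstacle is precisely this final upgrade. A bound on $\ex{X_t^2}$ alone does not entail pointwise a.s.\ H\"older behaviour at $0$: one must also dominate the oscillation of $X$ on arbitrarily small neighbourhoods of the origin, which requires either the Gaussian concentration / entropy argument sketched above or a dedicated Kolmogorov-type lemma from the appendix. The preceding Young--H\"older chain is routine in view of Theorem~\ref{thm:lem_XEcontin}; the delicate part lies here.
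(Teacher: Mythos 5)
Your increment bound is exactly the paper's: after the same WLOG reduction to $1/q+1/r\ge 1$ (via $r'=\min(r,q/(q-1))$), Young plus H\"older plus the embedding \eqref{neq:LL} give $\ex{(X_{t_2}-X_{t_1})^2}\le C_0^2\,(G(t_2)-G(t_1))^{2/q}$ with $G(u)=\int_0^u|b|^q$. Where you diverge is the upgrade to an almost-sure H\"older estimate at $0$. The paper does not use dyadic blocking or Gaussian concentration at all: it sets $F(t)=\int_0^t|b(s)|^q\,ds+t^{\lambda q}$ (the extra $t^{\lambda q}$ makes $F$ strictly increasing so the time change is invertible, and preserves $F(t)\le C_1t^{\lambda q}$), defines $Y_{F(t)}=X_t$, observes that $Y$ satisfies $\ex{(Y_{s_2}-Y_{s_1})^2}\le \mathrm{const}\cdot(s_2-s_1)^{2/q}$, and applies the global Kolmogorov-type Corollary~\ref{cor:Gausscontp} to get a modification $\widetilde Y$ that is H\"older up to order $1/q$. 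Composing back, $|\widetilde X_t-\widetilde X_0|\le C_2F(t)^{\lambda_1/(\lambda q)}\le C_3t^{\lambda_1}$ for any $\lambda_1<\lambda$, and continuity on all of $[0,T]$ comes for free. So your guess of ``a dedicated H\"older-at-a-point lemma'' was close in spirit, but the actual device is a deterministic reparametrization that reduces everything to the already-available global statement.

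Your alternative route (Borel--Cantelli at the dyadic points $t_n=T2^{-n}$, then oscillation control on the rings $[t_{n+1},t_n]$) can be made to work, but as written the oscillation step is under-justified: the intrinsic $L^2$-diameter of $\{X_t-X_{t_n}\}_{t\in[t_{n+1},t_n]}$ alone does not bound $\ex{\sup}$ of a Gaussian family, and Borell--TIS presupposes such a bound. You would need the entropy estimate, which does follow from the same increment bound --- partitioning $[t_{n+1},t_n]$ into $N$ pieces of equal $G$-increment shows the covering numbers at scale $\eps$ are $O((D_n/\eps)^q)$, so Dudley's integral is $O(D_n\sqrt q)$ with $D_n\le C_0Ct_n^\lambda$ --- but this has to be said; quoting only the diameter leaves the pivotal step open. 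The paper's time-change argument avoids these heavier tools entirely and is the cleaner of the two.
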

\begin{remark}
For the fBm with Hurst index $H>\frac12$,
apply Lemma~\ref{lem:Hoecumno}
to the functions $a$, $b$ and $c$
defined in Remark~\ref{rem1},
but for exponents
$1/p = H - \frac12 + \frac{\epsilon}{2}$,
$1/q = \frac12 - \epsilon$,
and  $1/r = \frac32 - H + \frac{\epsilon}{2}$
for some $\epsilon$ such that
$0 < \epsilon < \min\bigl(
2(1-H), \: \allowbreak
\frac12, \: \allowbreak
2\left(H-\frac12\right) \bigr)$.
Verify the conditions of Lemma~\ref{lem:Hoecumno}.
We have $H - \frac12 < 1/p < \frac12$,
$0 < 1/q < 1$,
$\frac32 - H < 1/r < 1$
(whence $a\in L^p[0,T]$ and $c\in L^r[0,T]$;
the relation $b\in L^q[0,T]$ holds true for all $q\ge 1$)
and $1/p + 1/q + 1/r = \frac32$.
Moreover, $\|b \indicatorFun_{[0,t]}\|_q
= C_\epsilon t^{H-1/2+1/q}$,
where $C_\epsilon = ((H-\frac12)q + 1)^{-1/q}$.
According to Lemma~\ref{lem:Hoecumno},
the fBm satisfies H\"older condition
at point $0$ up to order
$H-\frac12+1/q = H - \epsilon$.
As this can be proved for any $\epsilon>0$
small enough,
the fBm satisfies H\"older condition
at point $0$ up to order $H$.
\end{remark}

\begin{proof}
Without loss of generality we can assume that
$1/q + 1/r \ge 1$.
Indeed, under original conditions of the lemma,
let $r' = \min(r,\: q / (q-1))$.
Then $1 \le r' \le r$,
$1/q + 1/r' \ge 1$,
 $1/p + 1/q + 1/r' \le \frac32$,
 and $c\in L^{r'}[0,T]$.
The inequality $1/p + 1/q + 1/r' \le \frac32$ can be proved as follows:
 \begin{gather*}
\frac{1}{p} + \frac{1}{q} + \frac{1}{r'} = \frac{1}{p} + \frac{1}{q} + \frac{1}{r} \le \frac32 \quad \mbox{if} \quad r \le \frac{q}{q-1};\\
\frac{1}{p} + \frac{1}{q} + \frac{1}{r'} = \frac{1}{p} + \frac{1}{q} + \frac{q-1}{q} = \frac{1}{p} + 1 \le \frac12 + 1 = \frac32
\quad \mbox{if} \quad r \ge \frac{q}{q-1}.
\end{gather*}
The other relations can be proved easily.
Thus, after substitution of $r'$ for $r$
all conditions of Lemma~\ref{lem:Hoecumno} still hold true,
as well as $1/q + 1/r \ge 1$.

Denote
\[
F(t) = \int_0^t |b(s)|^q \, dt + t^{\lambda q}.
\]
Then $F:[0,T] \to [0,+\infty)$
is a strictly  increasing function such that
\begin{gather*}
F(0) = 0, \qquad\qquad
F(t) \le C_1 t^{\lambda q}
\quad \mbox{if\quad $0\le t \le T$},\\
\|b \indicatorFun_{(t_1,t_2]}\!\|_q
< (F(t_2) - F(t_1))^{1/q}
\quad \mbox{if\quad $0\le t_1 < t_2 \le T$}.
\end{gather*}

Let $0 \le t_1 < t_2 \le T$.
Again, denote $\tilde c(v) = c(-v)$.
Let us construct an upper bound for
$\|K_1(t_2,\,\cdot\,) - K_1(t_1,\,\cdot\,)\|_2 =
\| a \, (b \indicatorFun_{(t_1,t_2]} * \tilde c) \|_2
$,
see \eqref{eq:difKt2sKt1s}.
By Young's convolution inequality \eqref{neq:Young},
\[
\|b \indicatorFun_{(t_1,t_2]} * \tilde c\|
_{ (1/q+1/r-1)^{-1}} \le
\|b \indicatorFun_{(t_1,t_2]}\!\|_q \, \|\tilde c\|_r
\le (F(t_2) - F(t_1))^{1/q} \, \|c\|_r .
\]
Here we used that $q\ge 1$, $r \ge 1$ and $1/r+1/q \ge 1$.

The function $a \, (b \indicatorFun_{(t_1,t_2]} * \tilde c)$
is equal to $0$ outside the interval $[0, t_2]$.
Noticing that $2 \le (1/p + 1/q + 1/r - 1)^{-1}$,
using the inequality \eqref{neq:LL} for norms in $L^2[0, t_2]$
and $L^{(1/p + 1/q + 1/r - 1)^{-1}}[0, t_2]$
and H\"older inequality for non-conjugate exponents \eqref{neq:Holder},
we get
\begin{align*}
\| a \, (b \indicatorFun_{(t_1,t_2]} * \tilde c) \|_2
&\le
\| a \, (b \indicatorFun_{(t_1,t_2]} * \tilde c) \|_{ (1/p+1/q+1/r-1)^{-1}}\,
t_2^{\frac32 - 1/p - 1/q - 1/r}
\\ &
\le \|a\|_p \,
\|b \indicatorFun_{(t_1,t_2]} * \tilde c\|_{ (1/q+1/r-1)^{-1}}\,
t_2^{\frac32 - 1/p - 1/q - 1/r}
\\ &\le
\|a\|_p \,
(F(t_2) - F(t_1))^{1/q} \, \|c\|_r \,
t_2^{\frac32 - 1/p - 1/q - 1/r}.
\end{align*}
Hence
\begin{align*}
\ex{\bprod{X_{t_2}-X_{t_1}}^2} &= \|K(t_2,\,\cdot\,)-K(t_1,\,\cdot\,)\|_2^2
= \| a \, (b \indicatorFun_{(t_1,t_2]} * \tilde c) \|_2^2 \\
&\le
\|a\|_p^2 \,
(F(t_2) - F(t_1))^{2/q} \, \|c\|_r^2 \,
t_2^{3 - 2(1/p + 1/q + 1/r)} .
\end{align*}

Consider stochastic process $Y = \{Y_s : s\in[0, F(T)]\}$,
with $Y_{F(t)} = X_t$ for all $t\in[0,T]$.
This process $Y$ satisfies inequality
\[
\ex{\bprod{Y_{s_2}-Y_{s_1}}^2}
\le
\|a\|_p^2 \,
(s_2 - s_1)^{2/q} \, \|c\|_r^2
T^{3 - 2(1/p + 1/q + 1/r)}
\quad \mbox{if $0\le s_1 < s_2 \le F(T)$}.
\]
By Corollary~\ref{cor:Gausscontp},
the process $Y$
has a modification $\widetilde Y$ that satisfies H\"older condition
up to order $1/q$.
Therefore, for any $\lambda_1 \in (0, \lambda)$
\[
\exists C_2 \; \forall s_1 \! \in [0,F(T)] \;\,
\forall s_2 \mathrel{\in} [0,F(T)] \; : \;
|\widetilde Y_{s_2} - \widetilde Y_{s_1}| \le C_2 \, |s_2 - s_1|^{\lambda_1  / (\lambda q)} ,
\]
where $C_2$ is a random variable; $C_2 < \infty$ surely.
In particular,
\[
\exists C_2 \;
\forall s \mathrel{\in} [0,F(T)] \; : \;
|\widetilde Y_{s} - \widetilde Y_0| \le C_2 \, s^{\lambda_1   / (\lambda q)} .
\]
The stochastic process
$\widetilde X = \{\widetilde X_t, \allowbreak\; t \in [0,T]\}
= \{\widetilde Y_{F(t)}, \allowbreak\; t\in[0,T]\}$
is a modification of the stochastic process $X$.
It satisfies inequalities
\begin{gather*}
\exists C_2 \; \forall t \mathrel{\in} [0,T] \; : \;
|\widetilde X_t - \widetilde X_0| \le C_2 \, F(t)^{\lambda_1  / (\lambda q)}  ; \\
\exists C_3 \; \forall t \mathrel{\in} [0,T] \; : \;
|\widetilde X_t - \widetilde X_0| \le C_3 \, t^{\lambda_1} .
\end{gather*}
Thus, all the paths of the stochastic process $\widetilde X$
satisfy H\"older condition at point 0
with exponent $\lambda_1$.
\end{proof}

\section{Gaussian Volterra processes with Sonine kernels}\label{sec-3}
\subsection{Fractional Brownian motion and  Sonine kernels}\label{sec-fBm-Sonine}
Consider now a natural question: for which kernels $K$ of the form \eqref{eq:abckernel} Gaussian process of the form \eqref{eq:volterra1} with Volterra kernel $K$ generates the same filtration as the Wiener process $W$. Sufficient condition for this is the representation of the Wiener process $W$ as  \begin{equation}\label{eq:volterra.inverse}
W_t = \int_0^t L(t,s) \, dX_s
\end{equation}
where $L\in L^2([0,T]^2)$ is a Volterra kernel, and the integral is well defined, in some sense.
 As an example, let us consider fractional Brownian motion $B^H, H>1/2 $ admitting a  representation \eqref{eq:volterra} with Volterra kernel \eqref{equ-bigH}. For any $0<\varepsilon<1$  consider the approximation
$$B^{H,\varepsilon}_t=d_H\int_0^t
\left(s^{1/2 - H}
\int_s^t u^{H-1/2} (u-\varepsilon s)^{H-3/2}\, du\right) dW_s, t\ge 0.$$ Unlike the original process, in such approximation  we can change the limits of integration and get that

$$B^{H,\varepsilon}_t=d_H\int_0^t \left(u^{H-1/2}\int_0^u
s^{1/2 - H}
  (u-\varepsilon s)^{H-3/2}dW_s\, \right)du.$$ This representation allows to    write the equality
\begin{equation}\label{equ-int}\int_0^tu^{1/2-H}dB^{H}_u=d_H\int_0^t \left( \int_0^u
s^{1/2 - H}
  (u-\varepsilon s)^{H-3/2}dW_s\, \right)du,\end{equation}
  and it follows immediately from \eqref{equ-int} that
  \begin{equation}\begin{gathered}\label{equ-int1}\int_0^t(t-u)^{1/2-H}u^{1/2-H}dB^{H,\varepsilon}_u\hspace{4.5cm}\\=d_H\int_0^t (t-u)^{1/2-H}\left( \int_0^u
s^{1/2 - H}
  (u-\varepsilon s)^{H-3/2}dW_s\, \right)du\\= d_H\int_0^t s^{1/2 - H}\left( \int_s^t(t-u)^{1/2-H}
(u-\varepsilon s)^{H-3/2}du\,\, \right)dW_s.\end{gathered}\end{equation}
Applying  Theorem 3.3  from \cite{BannaMono2019}, p.~160,  we can go to the limit in \eqref{equ-int1} and get that
$$\int_0^t(t-u)^{1/2-H}u^{1/2-H}dB^{H}_u=d_H\int_0^t s^{1/2 - H}\left( \int_s^t(t-u)^{1/2-H}
(u-  s)^{H-3/2}du\,\, \right)dW_s. $$
Now the highlight is that the integral $\int_s^t(t-u)^{1/2-H}
(u-  s)^{H-3/2}\,du$ is a constant, namely, $\int_s^t(t{-}u)^{1/2-H}
(u{-} s)^{H-3/2}du=\int_0^t(t{-}u)^{1/2-H}
 u^{H-3/2}du=\mathrm{B}(3/2{-}H, H{-}1/2)$, where $\mathrm{B}$ is a beta-function. After we noticed this, then everything is simple:
$$Y_t:=\int_0^t(t-u)^{1/2-H}u^{1/2-H}dB^{H}_u=d_H \mathrm{B}(3/2-H,\: H-1/2)\int_0^t s^{1/2 - H} dW_s, $$ and finally  we get that
$$W_t=e_H\int_0^t s^{  H-1/2} dY_s $$ with some constant $e_H$. It means that we have representation \eqref{eq:volterra.inverse}  and, in particular, $W$ and $B^H$ generate the same filtration.
Of course, these transformations can be performed much faster, but our goal here was to pay attention on the role of the property of the convolution of two functions to be a constant. This property is a characterization of Sonine kernels.
  \subsection {General approach to    Volterra processes with Sonine kernels}

  First we give basic information about Sonine kernels, more details can be found in \cite{SC03}. We also consider, in a simplified form, the related  generalized fractional calculus introduced in \cite{kochubei}.

\begin{definition} A function $c\in L^1[0,T]$ is called a Sonine kernel  if there exists a function $h\in L^1[0,T]$ such that
\begin{equation}\label{eq:sonine}
\int_0^t c(s) h(t-s) \, ds= 1,\quad t\in(0,T].
\end{equation}
Functions $c,h$ are called  Sonine pair, or, equivalently, we say that $c$ and $h$ form (or create) a Sonine pair.
\end{definition}
If $\hat c$ and $\hat h$ denote the Laplace transforms of $c$ and $h$ respectively, then \eqref{eq:sonine} is equivalent to $\hat c(\lambda)\hat h(\lambda) = \lambda^{-1}$, $\lambda>0$.
Since the Laplace transform characterizes a function uniquely, for any $c$ there can be not more than one function $h$ satisfying \eqref{eq:sonine}. Examples of Sonine pairs are given   in Section \ref{sec:examples}.

Let functions $c$ and $h$ form a Sonine pair. For a function $f\in L^1[0,T]$ consider the operator
$$
\mathrm I^c_{0+} f (t) =\int_0^t c(t-s) f(s) ds.
$$
It is an analogue of forward fractional integration operator. Let us identify an inverse operator.
In order to do this, for $g \in AC[0,T]$ define
$$
\mathrm D^h_{0+} g(t) = \int_0^t h(t-s)g'(s)ds + h(t)g(0).
$$

Note that
\begin{gather*}
\int_0^t \mathrm D^h_{0+} g(u) du = \int_0^t \left(\int_0^u h(u-s)g'(s)ds + h(u)g(0)\right)\, du \\ = \int_0^t \int_0^u h(s)g'(u-s)ds\, du + g(0)\int_0^t h(u)du
\\=\int_0^t h(s)\int_s^t g'(u-s)du\, ds + g(0)\int_0^t h(u)du \\
 =\int_0^t h(s)\big(g(t-s)-g(0)\big)ds\, ds + g(0)\int_0^t h(u)du = \int_0^t h(s) g(t-s)ds,
\end{gather*}
so we can also write
\begin{equation}\label{eq:derivative}
\mathrm D^h_{0+} g(t) = \frac{d}{dt}\int_0^t h(s) g(t-s)ds = \frac{d}{dt}\int_0^t h(t-s) g(s)ds,
\end{equation}
where the derivative is understood in the weak sense.
Similarly, we can define an analogue of backward fractional integral:
$$
\mathrm I^c_{T-} f(s) = \int_s^T c(t-s) f(t)dt, \qquad
f \in L^1[0,T]
$$
and the corresponding differentiation operator
$$
\mathrm D^h_{T-} g(s) = g(T)h(T-s) - \int_s^T h(t-s)g'(t)dt.
$$
\begin{lemma}\label{lem:forwarddiff}
Let $g\in AC[0,T]$. Then $\displaystyle\mathrm{I}_{0+}^c\big( \mathrm D^h_{0+} g\big)(t) = g(t)$  and $\displaystyle\mathrm{I}_{T-}^c\big( \mathrm D^h_{T-} g\big)(s) = g(s)$.
\end{lemma}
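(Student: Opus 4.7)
My plan is to prove both identities by direct computation: unfold the definitions of $\mathrm{D}_{0+}^h$ and $\mathrm{D}_{T-}^h$, swap the order of integration via Fubini, and then use the Sonine defining property $\int_0^\tau c(\tau-v) h(v)\, dv = 1$ to collapse the inner integrals.

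For the forward identity, I would start from
\[
\mathrm{I}_{0+}^c\bigl(\mathrm{D}_{0+}^h g\bigr)(t) = \int_0^t c(t-u)\!\left[\int_0^u h(u-s)\, g'(s)\, ds + h(u)\, g(0)\right] du,
\]
split the integral into two pieces, and handle them separately. The term involving $g(0)$ is immediately $g(0)\int_0^t c(t-u)h(u)\,du = g(0)$ by \eqref{eq:sonine}. In the remaining double integral I would apply Fubini to interchange the integration order:
\[
\int_0^t g'(s)\!\left(\int_s^t c(t-u)\, h(u-s)\, du\right) ds,
\]
and then substitute $v = u - s$ to rewrite the inner integral as $\int_0^{t-s} c((t-s)-v)\, h(v)\, dv = 1$ by the Sonine condition. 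What remains is $\int_0^t g'(s)\, ds = g(t)-g(0)$, so the two pieces sum to $g(t)$.

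The backward identity is symmetric. Starting from
\[
\mathrm{I}_{T-}^c\bigl(\mathrm{D}_{T-}^h g\bigr)(s) = \int_s^T c(t-s)\!\left[g(T)h(T-t) - \int_t^T h(u-t)\, g'(u)\, du\right] dt,
\]
the term with $g(T)$ evaluates to $g(T)$ after the substitution $w = t-s$ turns the inner integral into $\int_0^{T-s} c(w)\, h((T-s)-w)\, dw = 1$. For the double integral, Fubini on the region $\{s \le t \le u \le T\}$ gives
\[
\int_s^T g'(u) \!\left(\int_s^u c(t-s)\, h(u-t)\, dt\right) du,
\]
and the substitution $v = t-s$ again reduces the inner integral to $1$, leaving $\int_s^T g'(u)\, du = g(T)-g(s)$. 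Subtracting yields $g(s)$.

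The only real subtlety is justifying the Fubini step: I need to know that $(u,s)\mapsto c(t-u)\, h(u-s)\, g'(s)$ (respectively its backward analogue) is integrable on the relevant triangular domain. This follows because $c,h\in L^1[0,T]$ and $g'\in L^1[0,T]$ (since $g\in AC[0,T]$), and Young's convolution inequality $\|c*h\|_1 \le \|c\|_1\|h\|_1$ together with a further convolution with $|g'|$ gives a finite $L^1$ bound. Once Fubini is justified, everything else is a mechanical application of \eqref{eq:sonine}.
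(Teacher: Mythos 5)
Your proof is correct and follows essentially the same route as the paper's: unfold the definitions, split off the boundary term, apply Fubini to the double integral, and collapse the inner integral to $1$ via the Sonine identity, leaving $\int g'$ plus the boundary contribution. The only difference is that you explicitly justify the Fubini step (via $c,h,g'\in L^1$ and Young's inequality), which the paper takes for granted.
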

\begin{proof}
We have
\begin{align*}
\mathrm I^c_{0+}\big( \mathrm D^h_{0+} g\big)(t)
&=
\int_0^t c(t-s)\left(\int_0^s h(s-u) g'(u)du + h(s)g(0)\right)\, ds
\\ &=
\int_0^t \int_u^t c(t-s)h(s-u)ds\, g'(u)du + g(0)\int_{0}^t c(t-s)h(s)ds
\\ & =
\int_0^t  g'(u)du + g(0) = g(t),
\end{align*}
as required.
Similarly,
\begin{align*}
\mathrm I^c_{T-}\big( \mathrm D^h_{T-} g\big)(s) &= \int_s^T c(t-s)\left(h(T-t)g(T) - \int_t^T h(u-t) g'(u)du \right) ds\\
 &= g(T)\int_{s}^T c(t-s)h(T-t)dt -  \int_s^T \int_s^u c(t-s)h(u-t)\,dt\, g'(u)\, du \\
  &= g(T) - \int_s^T  g'(u)du + g(s) = g(s)
\end{align*}
as required.
\end{proof}
Now consider a Gaussian process  $X$  given by the integral transformation of type \eqref{eq:volterra} of the form \eqref{eq:abckernel} satisfying condition \ref{cond:K1} of Theorem~\ref{thm:lem_XEcontin}.
Define the integral operator
$$
\mathcal K f(t) = \int_0^t a(s)\int_s^t b(u)c(u-s) du\, f(s) ds.
$$
Note that for $f\in L^2[0,T]$, $\mathcal K f(t)\in AC[0,T]$.
Indeed, by definition,
$$
\mathcal K f (t) = \int_0^t K(t,s) f(s)ds =  \int_0^t \int_s^t \frac{\partial}{\partial u}K(u,s) du\, f(s)ds.
$$
Since $f$ and $\frac{\partial}{\partial t}K(t,s)$ are square integrable, the product $f \frac{\partial}{\partial u}K$ is integrable on $\set{(s,u): 0\le s\le u\le t }$. Therefore, we can apply Fubini theorem to get
$$
\mathcal K f (t) = \int_0^t \int_0^u \frac{\partial}{\partial u}K(u,s) f(s) ds\, du = \int_0^t \alpha(u)du,
$$
where $\alpha \in L^1[0,t]$ for all $t\in[0,T]$, so $\alpha \in L^1[0,T]$.
Consequently, for $f\in L^2[0,T]$ we can denote by $$\mathcal J f(t) = \int_0^t \frac{\partial}{\partial t}K(t,s) f(s) ds$$ the weak derivative of $\mathcal K f$.

Further, define for a measurable $g\colon [0,T] \to \R$ such that
$$
\norm{g}_{\mathcal H_X}^2 := \int_0^T \left(\int_s^T \frac{\partial}{\partial u} K(u,s) g(u) du\right)^2 ds<\infty
$$
the integral operator
$$
\mathcal J^* g(s) = \int_s^T \frac{\partial}{\partial u} K(u,s) g(u)du.
$$
It can be extended to the completion $\mathcal H_X$ of the set  of measurable functions with finite norm $\norm{\cdot}_{\mathcal H_X}^2$ so that
$$
\norm{g}_{\mathcal H_X}^2  = \int_0^T \bigl(\mathcal J^* g (t)\bigr)^2 dt,\ g\in \mathcal H_X.
$$
The operator $\mathcal J^*$ is related to the adjoint $\mathcal K^*$ of $\mathcal K$ in the following way: for a finite signed measure $\mu$ on $[0,T]$,
$$
\mathcal K^* \mu  = \mathcal J^* h
\quad \mbox{with}
\quad h(t) = \mu([t,T]).
$$

We are going to identify inverse to the operators $\mathcal J$ and $\mathcal J^*$.
Clearly, it is not possible in general, so we will assume that
{\settowidth{\leftmargini}{(K2)~\,}
\begin{enumerate}
\renewcommand{\theenumi}{{\upshape(S)}}
\renewcommand{\labelenumi}{\upshape(S) \,\,}
	\item\label{cond:S}
the function $c$ forms a Sonine pair with some $h\in L^1[0,T]$.
\end{enumerate}}
In this case the operators $\mathcal J$ and $\mathcal J^*$ can be written in terms of ``fractional'' operators defined above:
$$
\mathcal J f(t) = \int_0^t \frac{\partial}{\partial t}K(t,s) f(s)ds =
\int_0^t a(s) \, b(t) \, c(t-s) f(s) \, ds
= b(t)\, \mathrm I_{0+}^c (af)(t),
$$
and
$$
\mathcal J^* g(s) =
\int_s^T \! a(s) \, b(t) \, c(t-s) \, g(t) \, dt =
a(s)\,\mathrm I_{T-}^c (bg)(s).
$$
In order for this operators to be injective, we assume
{\settowidth{\leftmargini}{(K2)~\, }
\begin{enumerate}
\renewcommand{\theenumi}{{\upshape(K\arabic{enumi})}}
\renewcommand{\labelenumi}{\upshape(K\arabic{enumi})}
\setcounter{enumi}{1}
	\item\label{cond:K2}
the functions $a,b$ are positive a.e.\ on $[0,T]$.
\end{enumerate}}
For $f$ such that $fb^{-1}\in AC[0,T]$, define
$$
\mathcal L f(t) = a(t)^{-1}\mathrm D_{0+}^h \big(f b^{-1}\big)(t) = a(t)^{-1}\left(\int_0^t h(t-s) \big(fb^{-1}\big)'(s)ds + h(t) \big(fb^{-1}\big)(0)\right),
$$
and for $g$ such that $ga^{-1}\in AC[0,T]$, define
\begin{align*}
\mathcal L^* g(s)
&=
b(s)^{-1}\mathrm D_{T-}^h \big(g a^{-1}\big)(s)
\\ &=
b(s)^{-1}\left(h(T-s) \big(ga^{-1}\big)(T) - \int_s^T h(t-s)\big(ga^{-1}\big)'(t)dt\right).
\end{align*}
\begin{proposition}\label{prop:Kinv}
Let the assumptions \ref{cond:S}, \ref{cond:K1} and \ref{cond:K2} hold.
Then the operators $\mathcal J$ and $\mathcal J^*$ are injective,
and for functions $f,g$ such that $fb^{-1}\in AC[0,T]$, $ga^{-1}\in AC[0,T]$,
$$
\mathcal J \mathcal L f(t) = f(t), \qquad \mathcal J^* \mathcal L^* g(s) = g(s).
$$
\end{proposition}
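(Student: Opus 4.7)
The plan is to derive the two composition identities by direct substitution followed by an appeal to Lemma~\ref{lem:forwarddiff}, and to obtain the injectivity statements from a Sonine inversion argument (convolve with the dual kernel $h$).

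For the identity $\mathcal{J}\mathcal{L}f = f$, I would substitute the definition of $\mathcal{L}f$ into that of $\mathcal{J}$. The factor $a(s)$ inside the integrand of $\mathcal{J}$ cancels the $a(s)^{-1}$ in $\mathcal{L}$ (well-defined a.e.\ by~\ref{cond:K2}), leaving
\[
\mathcal{J}\mathcal{L}f(t) \;=\; b(t)\,\mathrm{I}_{0+}^c\bigl(\mathrm{D}_{0+}^h(fb^{-1})\bigr)(t).
\]
Since $fb^{-1}\in AC[0,T]$, Lemma~\ref{lem:forwarddiff} identifies the inner expression with $(fb^{-1})(t)$, and multiplication by $b(t)$ yields $f(t)$. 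The second identity $\mathcal{J}^*\mathcal{L}^*g = g$ is obtained by an entirely parallel computation using the backward operators $\mathrm{I}_{T-}^c$ and $\mathrm{D}_{T-}^h$ and the backward half of Lemma~\ref{lem:forwarddiff}.

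For injectivity of $\mathcal{J}$, I would suppose $\mathcal{J}f\equiv 0$; positivity of $b$ a.e.\ (by~\ref{cond:K2}) reduces this to $(c*(af))(t)=0$ for a.e.\ $t\in[0,T]$. Convolving both sides with $h$ and exchanging the order of integration by Fubini, the inner integral becomes $\int_0^{t-u} c(v)h(t-u-v)\,dv$, which collapses to $1$ by the Sonine identity~\eqref{eq:sonine}; hence $\int_0^t (af)(u)\,du = 0$ for all $t$, so $af\equiv 0$ and thus $f\equiv 0$ a.e. Injectivity of $\mathcal{J}^*$ reduces to the same fact after the change of variables $s\mapsto T-s$, $t\mapsto T-t$, which turns the backward convolution $\mathrm{I}_{T-}^c(bg)$ into a forward convolution of the time-reversed function. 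The only point requiring care is that assumption~\ref{cond:K1}, together with~\ref{cond:S} and the $AC$ hypothesis on $fb^{-1}$ (respectively $ga^{-1}$), provides enough integrability to justify every Fubini exchange and every cancellation; no deeper analytic obstacle appears.
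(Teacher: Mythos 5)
Your proposal is correct and follows essentially the same route as the paper: injectivity via positivity of $a,b$ from \ref{cond:K2}, convolution with $h$, Fubini, and the Sonine identity \eqref{eq:sonine} to collapse the inner integral to $1$; and the composition identities by cancelling $a$ (resp.\ $b^{-1}$) and invoking Lemma~\ref{lem:forwarddiff}. (Incidentally, your version writes the reduction correctly as $\mathrm I_{0+}^c(af)=0$, whereas the paper's text has a harmless slip writing $bf$ there.)
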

\begin{proof}
Assume that $\mathcal J f = 0$ for some $f\in L^2[0,T]$. Then, by \ref{cond:K2}, $\mathrm I_{0+}^c (bf) = 0$ a.e.\ on $[0,T]$. Therefore, for any $t\in[0,T]$
\begin{align*}
0 &= \int_0^t h(t-s)\mathrm I_{0+}^c (bf)(s)ds = \int_0^t h(t-s)\int_0^s c(s-u) b(u)f(u)du \,ds \\
&= \int_0^t \int_u^t h(t-s)c(s-u)ds\,  b(u)f(u) du = \int_0^t  b(u)f(u) du,
\end{align*}
whence $b f = 0$ a.e.\ on $[0,T]$, so, applying to \ref{cond:K2} once more, $f = 0$ a.e.{\spacefactor=3000}
The injectivity of $\mathcal J^*$ is shown similarly, and the second statement follows from Lemma \ref{lem:forwarddiff}.
\end{proof}
Now we are in a position to invert the covariance operator $\mathcal R = \mathcal K\mathcal K^*$ of $X$. We need a further assumption.
{\settowidth{\leftmargini}{(K2)~\,}
\begin{enumerate}
\renewcommand{\theenumi}{{\upshape(K\arabic{enumi})}}
\renewcommand{\labelenumi}{\upshape(K\arabic{enumi})}
\setcounter{enumi}{2}
	\item\label{cond:K3}
$a^{-1}\in C^1[0,T]$,\; $d:= b^{-1}\in C^2[0,T]$ and either  $d(0) = d'(0) = 0$ or\\ $a^{-2}h\in C^1[0,T]$.
\end{enumerate}}
\begin{proposition}\label{prop:Rinv}
Let the assumptions {\ref{cond:S}, \ref{cond:K1}\,--\,\ref{cond:K3}} hold, and $f\in C^3[0,T]$ with $f(0)= 0$. Then for $h = \mathcal L^*\mathcal L f'$, the measure $\mu([t,T]) = h(t)$
 is such that $\mathcal R \mu = f$.
\end{proposition}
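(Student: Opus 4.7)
The plan is to reduce $\mathcal R\mu = f$ to two applications of Proposition~\ref{prop:Kinv}, exploiting the fact that $\mathcal K$ acts as the antiderivative of $\mathcal J$. Setting $g := f'$ and $h := \mathcal L^*\mathcal L g$, the Fubini-based computation carried out immediately before the definition of $\mathcal J$ in the paper already shows that $\mathcal K\phi(t) = \int_0^t \mathcal J\phi(s)\,ds$ for every $\phi\in L^2[0,T]$. Combined with the definition $\mathcal K^*\mu = \mathcal J^* h$ (with $h(t)=\mu([t,T])$), this yields
\[
\mathcal R\mu(t) = \mathcal K(\mathcal J^* h)(t) = \int_0^t \mathcal J\mathcal J^* h(u)\,du.
\]
Since $f(0)=0$, it therefore suffices to verify $\mathcal J\mathcal J^* h = f'$ a.e.\ on $[0,T]$.

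For this I would chain the two inversion identities from Proposition~\ref{prop:Kinv}. First, provided $(\mathcal L g)\,a^{-1}\in AC[0,T]$, that proposition yields $\mathcal J^*\mathcal L^*(\mathcal L g) = \mathcal L g$, i.e.\ $\mathcal J^* h = \mathcal L g$. Second, provided $gb^{-1} = f'd\in AC[0,T]$ (which is immediate from $f\in C^3$ and $d=b^{-1}\in C^2$ by~\ref{cond:K3}), the same proposition yields $\mathcal J\mathcal L g = g$. Chaining these two identities gives $\mathcal J\mathcal J^* h = f'$, and the conclusion $\mathcal R\mu = f$ then drops out of the reduction above.

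The main obstacle is the absolute continuity of $(\mathcal L g)\,a^{-1}$, and this is precisely where the dichotomy in~\ref{cond:K3} earns its keep. Unfolding the definition of $\mathcal L$ (with $h$ here denoting the Sonine mate of $c$, as in the paper),
\[
(\mathcal L g)(t)\,a(t)^{-1} = a(t)^{-2}\!\int_0^t h(t-s)(f'd)'(s)\,ds + a(t)^{-2}\,h(t)\,f'(0)\,d(0).
\]
The factor $a^{-2} = (a^{-1})^2\in C^1[0,T]$ by~\ref{cond:K3}, and the convolution $h*(f'd)'$ is AC on $[0,T]$: a direct computation shows that its weak derivative equals $h(t)(f'd)'(0) + \int_0^t h(t-s)(f'd)''(s)\,ds$, which lies in $L^1[0,T]$ since $h\in L^1$ and $(f'd)''\in C^0$. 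The remaining boundary term $a^{-2}(t)\,h(t)\,f'(0)\,d(0)$ is then disposed of by~\ref{cond:K3}: either $d(0)=0$ and the term vanishes outright, or $a^{-2}h\in C^1[0,T]$ and the term is itself $C^1$. In either case the product and sum of $C^1$ and AC functions remains AC, completing the regularity check and thereby the proof.
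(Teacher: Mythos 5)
Your proposal is correct and follows essentially the same route as the paper: reduce $\mathcal R\mu = f$ to $\mathcal J\mathcal K^*\mu = f'$ via $\mathcal K\phi(t)=\int_0^t\mathcal J\phi(s)\,ds$, chain the two inversion identities of Proposition~\ref{prop:Kinv}, and verify the key regularity condition $(\mathcal L f')a^{-1}\in AC[0,T]$ by unfolding $\mathcal L$, differentiating the convolution as in \eqref{eq:derivative}, and invoking the dichotomy in \ref{cond:K3} for the boundary term. The only cosmetic difference is that you content yourself with an $L^1$ weak derivative where the paper asserts bounded derivatives, which is all that absolute continuity actually requires.
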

\begin{proof}
Thanks to \ref{cond:K3}, $f'b^{-1}\in AC[0,T]$ and
\begin{equation}\label{eq:aLf}
a(t)^{-1}\mathcal L f'(t) = a(t)^{-2}\left(\int_0^t h(t-s) \big(fb^{-1}\big)'(s)ds + h(t) \big(f'b^{-1}\big)(0)\right).
\end{equation}
Similarly to \eqref{eq:derivative}, $\int_0^t h(t-s) \big(fb^{-1}\big)'(s)ds$ is absolutely continuous with
$$
\frac{d}{dt}\int_0^t h(t-s) \big(f'b^{-1}\big)'(s)ds = \int_0^t h(s) \big(f'b^{-1}\big)''(t-s)ds + h(t) \big(f'b^{-1}\big)'(0).
$$
Then, thanks to \ref{cond:K3}, both summands in the right-hand side of \eqref{eq:aLf} are absolutely continuous with bounded derivatives. So by Proposition~\ref{prop:Kinv},
\[
\mathcal K^* \mu = \mathcal J^* h = \mathcal J^* \mathcal L^* \mathcal L f'
= \mathcal L f' .
\]
and
\[
\mathcal J \mathcal K^* \mu = \mathcal J \mathcal L f' = f' .
\]
Therefore,
\[
\mathcal R \mu \,(t) =
\mathcal K \mathcal K^* \mu \, (t) =
\int_0^t \mathcal J \mathcal K^* \mu \, (s) \, ds =
\int_0^t f'(s) \, ds = f(t)
\]
as required.
\end{proof}
Now we recall the definition of integral with respect to the $X$ given by \eqref{eq:volterra1}; for more details see \cite{AMN01}. Define
$$
I_X(\ind{[0,t]}) = \int_0^T \ind{[0,t]}(s) \, dX_s  = X_t
$$
and extend this by linearity to the set $\mathcal S$ of piecewise constant function. Then, for any $g\in \mathcal S$,
$$
\ex{I_X(g)^2} = \norm{g}^2_{\mathcal H_X}.
$$
Therefore, $I_X$ can be extended to isometry between $\mathcal H_X$ and a subspace of $L^2(\Omega)$. Moreover, for any  $g\in \mathcal H_X$,
\begin{equation}\label{intdX}
	\int_0^T \! g(t) \, dX_t = \int_0^T \! \mathcal J^* g(t) \, dW_t.
\end{equation}
\begin{proposition}
	Let the assumptions $(\mathrm{S})$, $(\mathrm{K}1)-(\mathrm{K}3)$ be satisfied, and $X$ be given by \eqref{eq:volterra1}. Then
	$$W_t = \int_0^t k(t,s) \, dX_s,
	$$
	where
	$$
	k(t,s) = p(t)b(s)^{-1} h(t-s) - b(s)^{-1}\int_s^t p'(v) h(v-s) \, dv ,
	$$
	and $p = a^{-1}$.
\end{proposition}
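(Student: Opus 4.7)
The plan is to apply the isometry \eqref{intdX}: if we can exhibit a function $\phi_t \in \mathcal H_X$ supported in $[0,t]$ with $\mathcal J^*\phi_t = \ind{[0,t]}$, then
$$
\int_0^t \phi_t(s)\, dX_s = \int_0^T \mathcal J^* \phi_t(s)\, dW_s = W_t,
$$
so the desired kernel $k(t,\cdot)$ is precisely such a $\phi_t$. The natural candidate, suggested by Proposition~\ref{prop:Kinv}, is $\phi_t = \mathcal L^* \ind{[0,t]}$; formally treating the distributional derivative of $\ind{[0,t]}/a = p\,\ind{[0,t]}$ as $p'\ind{[0,t]} - p(t)\delta_t$ and substituting into the definition of $\mathcal L^*$ produces exactly the expression for $k(t,\cdot)$ given in the statement.

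Since $p\,\ind{[0,t]}$ is not in $AC[0,T]$, I would not invoke Proposition~\ref{prop:Kinv} directly, but instead verify by a direct computation that $\mathcal J^*\bigl(k(t,\cdot)\ind{[0,t]}\bigr) = \ind{[0,t]}$. The vanishing on $(t,T]$ is immediate from the support. The nontrivial identity is
$$
a(s)\int_s^t c(u-s)\, b(u)\, k(t,u)\, du = 1, \qquad s \in [0,t].
$$
Substituting the two-term formula for $b(u) k(t,u)$, swapping integration order in the piece containing $p'(v)$, and invoking the Sonine relation $\int_s^v c(u-s)\, h(v-u)\, du = 1$ in both summands, the first piece contributes $p(t)$ and the second contributes $p(t)-p(s)$; their difference collapses to $p(s) = a(s)^{-1}$, as required.

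To close the argument, I would check that $\phi_t := k(t,\cdot)\ind{[0,t]}$ belongs to $\mathcal H_X$; this is automatic from $\|\phi_t\|_{\mathcal H_X}^2 = \int_0^T (\mathcal J^*\phi_t(s))^2\, ds = t < \infty$ once the identity above is established, so \eqref{intdX} applies and the proposition follows. The main obstacle I anticipate is not the algebra but the Fubini step in the computation and the accompanying integrability bookkeeping: one needs the $L^1$ property of $h$ from assumption \ref{cond:S}, the $C^1$ regularity of $p = a^{-1}$ from \ref{cond:K3}, and the integrability assumption \ref{cond:K1} together with the positivity \ref{cond:K2} to legitimize the exchange of integrals. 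These are routine but must be recorded carefully.
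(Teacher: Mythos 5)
Your proposal is correct and follows essentially the same route as the paper: decompose $k(t,\cdot)$ into the two terms, apply $\mathcal J^*$ to each, use Fubini and the Sonine identity $\int_s^v c(u-s)h(v-u)\,du = 1$ to get $p(t)a(s)$ and $a(s)(p(t)-p(s))$ respectively, and conclude via the transfer formula \eqref{intdX}. The only additions — the heuristic motivation via $\mathcal L^*\ind{[0,t]}$ and the explicit check that $k(t,\cdot)\ind{[0,t]}\in\mathcal H_X$ — are harmless refinements of the same argument.
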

\begin{proof}
	Write $k(t,s) = k_1(t,s) - k_2(t,s)$, where $ k_1(t,s) = p(t)b(s)^{-1} h(t-s),  k_2(t,s) =b(s)^{-1}\int_s^t p'(v) h(v-s) dv$, and transform
	\begin{align*}
			\bigl(\mathcal J^* k_1(t,\cdot)\ind{[0,t]}\bigr)(s)  & =
	\int_s^T \frac{\partial}{\partial u}K(u,s) p(t)b(u)^{-1} h(t-u)\ind{[0,t]}(u)du \\
		& = p(t)\int_s^t a(s)b(u)c(u-s) b(u)^{-1}h(t-u)du\, \ind{[0,t]}(s) \\
		& = p(t)\,a(s)\int_s^t c(u-s) h(t-u)du\, \ind{[0,t]}(s)\\
		& = p(t)\,a(s)\ind{[0,t]}(s).
	\end{align*}
	Similarly,
	\begin{align*}
		\bigl(\mathcal J^* k_2(t,\cdot)\ind{[0,t]}\bigr)(s)  & =
		\int_s^t a(s)\,c(u-s)\int_u^t p'(v) h(v-u)dv\, du\,\ind{[0,t]}(s)\\
		& = a(s)\int_s^t p'(v) \int_s^v c(u-s)h(v-u)du\, dv \,\ind{[0,t]}(s)\\
		& = a(s)\int_s^t p'(v) \, dv \,\ind{[0,t]}(s) = a(s)\bigl(p(t) - p(s)\bigr)\ind{[0,t]}(s).
	\end{align*}
	Consequently,
	\begin{align*}
	\bigl(\mathcal J^* k(t,\cdot)\ind{[0,t]}\bigr)(s)  & = p(t)\,a(s)\ind{[0,t]}(s) - a(s)\bigl(p(t) - p(s)\bigr)\ind{[0,t]}(s)\\
	& = a(s) \, p(s)\ind{[0,t]}(s) = \ind{[0,t]}(s).
	\end{align*}
	Therefore, thanks to \eqref{intdX},
	\begin{align*}
	\int_0^T k(t,s) \, dX_s = \int_0^T \bigl(\mathcal J^* k(t,\cdot)\ind{[0,t]}\bigr)(s) \, dW_s = \int_0^T \ind{[0,t]}(s) \, dW_s = W_t,
	\end{align*}
	as required.
\end{proof}

\section{Examples of Sonine kernels}\label{sec:examples}

\begin{example} Functions $c(s)=s^{-\alpha}$ and  $h(s)=s^{ \alpha-1}$ with some $\alpha\in (0,1/2)$  were considered above in connection with fractional Brownian motion, see subsection \ref{sec-fBm-Sonine}.
\end{example}
\begin{example}
For $\alpha\in(0,1)$ and $A\in \R$, let $\gamma=\Gamma'(1)$ be Euler-Mascheroni constant, $l = \gamma -A$. Then
$$c(x) = \frac{1}{\Gamma(\alpha)} x^{\alpha-1}\left(\ln \textstyle{\frac1x} + A\right)$$
and
$$
h(x) = \int_0^\infty \frac{x^{t-\alpha}e^{lt}}{\Gamma(1-\alpha + t)}dt
$$
create a  Sonine pair, see \cite{SC03}.
\end{example}

\begin{example}\label{2.10}
This example was proposed by Sonine himself \cite{sonin}: for $\nu\in(0,1)$,
$$
h(x) = x^{-\nu/2}J_{-\nu}(2\sqrt{x}), \quad c(x) = x^{(\nu-1)/2} I_{\nu-1}(2\sqrt{x}),
$$
where $J$ and $I$ are, respectively, Bessel and modified Bessel functions of the first kind,
\begin{equation*}
  J_\nu(y)=\frac{y^\nu}{2^\nu}\sum_{k=0}^\infty
  \frac{(-1)^ky^{2k}2^{-2k}}{k!\Gamma(\nu+k+1)},
\end{equation*} and
\begin{equation*}
  I_\nu(y)=\frac{y^\nu}{2^\nu}\sum_{k=0}^\infty
  \frac{ y^{2k}2^{-2k}}{k!\Gamma(\nu+k+1)}.
\end{equation*} In particular, setting $\nu = 1/2$, we get the following Sonine pair:
\begin{gather}\label{Sonine}
h(x) = \frac{\cos 2\sqrt{x}}{2\sqrt{\pi x}},\quad c(x) = \frac{\cosh 2\sqrt{x}}{2\sqrt{\pi x}}.
\end{gather}
\end{example}
\begin{remark} It is interesting that the creation of Sonine pairs allows to get the relations between the special functions (see \cite[Section 1.14]{bookmyus}).
Let
\begin{gather*}
c(x)=x^{-1/2}\cosh(ax^{1/2}),\end{gather*}
and let \begin{gather*}h(x)=\int_0^xs^{\nu/2}
J_\nu(as^{1/2})\,(x-s)^\gamma ds
\end{gather*} be a fractional integral of $s^{\nu/2}
J_\nu(as^{1/2})$,
where $-1<\nu<-\frac{1}{2}$,
$\gamma+\nu=-\frac{3}{2}$.
If we denote $F_y(\lambda)$  Laplace transform of function $y$ at point $\lambda$, then
the Laplace transforms of these functions equal
\begin{gather*}
F_c(\lambda)=(\pi/\lambda)^{1/2}\exp (a^2/4\lambda),
\\
\begin{aligned}
F_{h}(\lambda)&=\Gamma(\gamma+1)2^{-\nu}a^\nu\lambda^{-\nu-1} \exp(-a^2/4\lambda)\lambda^{-\gamma-1} \\
&=\Gamma(\gamma+1)2^{-\nu}a^\nu\lambda^{-1/2}
\exp(-a^2/4\lambda),
\end{aligned}\\
F_c(\lambda)
F_{h}(\lambda)=\Gamma(\gamma+1)2^{-\nu }\sqrt{\pi} a^\nu\lambda^{-1},\qquad
\lambda>0,
\end{gather*}
whence their convolution equals
$$(c\ast h)_t=\Gamma(\gamma+1)2^{-\nu }\sqrt{\pi}
a^\nu,  \qquad t>0.$$
Therefore $c(x)$ and $(\Gamma(\gamma+1)2^{-\nu }\sqrt{\pi}
a^\nu)^{-1}h(x)$ create a Sonine pair. However, comparing with  Example \ref{2.10} with $a=2$, and taking into account that the pair in Sonine pair is unique,
we get that $$ 4\sqrt{\pi}(\Gamma(\gamma+1)
 )^{-1}\int_0^xs^{\nu/2}
J_\nu(2s^{1/2})\,(x-s)^\gamma ds=\frac{\cos 2\sqrt{x}}{  \sqrt{  x}}.$$
Similarly, let    $c(x)=\int_0^xt^{-1/2}\cosh(at^{1/2})\,(x-t)^\gamma dt,\
h(x)=x^{\nu/2}J_\nu(ax^{1/2})$ with $\gamma\in(-1,-\frac{1}{2})$,\spacefactor=3000{} 
$\nu\in(-1,0)$,,\spacefactor=3000{} $\gamma+\nu=-\frac{3}{2}$. Then $$F_{c}(\lambda)
=\pi^{1/2}\Gamma(\gamma+1)\lambda^{-\gamma-3/2}\exp(a^2/4\lambda),$$ and
$$F_h(\lambda)=\frac{a^\nu}{2^\nu}\lambda^{-\nu-1}\exp(-a^2/4\lambda),\;\text{whence}\;
F_{c}(\lambda)F_h(\lambda)=\pi^{1/2}\Gamma(\gamma+1)\frac{a^\nu}{2^\nu}\lambda^{-1}.$$
If we put $a=2$ and compare with \eqref{Sonine}, we get the following representation
$$\pi^{-1/2}(\Gamma(\gamma+1))^{-1}\int_0^xt^{-1/2}\cosh(2t^{1/2})\,(x-t)^\gamma dt=x^{(-\nu-1)/2} I_{-\nu-1}(2\sqrt{x}).$$
\end{remark}

\begin{example}\label{example:positive}
On the way of creation of the new Sonine pairs, a natural idea is to consider $g(s)=e^{\beta s}s^{ \alpha-1}$ with $\beta\in\R$ and examine if this function admits a Sonine pair.  It happens so  that the answer to this question is positive, but far from obvious and not simple. All preliminary results are contained in subsection \ref{subsect-4}.
Let
\[
g(x) = \frac{\exp(\beta x)}{\Gamma(\alpha) x^{1-\alpha}}
, \quad 0<\alpha<1, \quad \beta<0;
\qquad
y(x) = 1.
\]
Then
\[
h(x) = \alpha \beta \hyperIFI(\alpha+1; \: 2; \: \beta x)
< 0, \qquad x\in[0,T],
\]
where $\hyperIFI$ is Kummer hypergeometric
function; see Section~\ref{sect:Example} in the Appendix.  
 The conditions of Theorem \ref{thm:positive} hold  true.
The equation \eqref{eq:Volterra1y1} has
a unique solution in $L^{1}[0,T]$
(Actually, it has many solutions,
but each two solutions are equal almost everywhere.)
The solution has a representative that
is continuous and attains only positive values
on the left-open interval $(0,T]$, and it is a Sonine pair to $g(s)=e^{\beta s}s^{ \alpha-1}$.
\end{example}

\section{Appendix}\label{app}

\subsection{Inequalities for norms of convolutions and products}
Recall notation $\|f\|_p$ for the norm
of function $f\in L^p(\mathbb{R})$,
$p \in [1, \infty]$.
The convolution of two measurable functions
$f$ and $g$ is defined by integration
\begin{equation}\label{eq:defconv}
(f*g)(t) = \int_{\mathbb{R}} f(s) g(t-s) \, ds .
\end{equation}

Now we state an inequality for the norm of convolution
of two functions.
If $p\in[1,\infty]$, $q\in[1,\infty]$
but $1/p+1/q\ge 1$,
$f\in L^p(\mathbb{R})$,
$g\in L^q(\mathbb{R})$,
then the  convolution $f*g$
is well-defined almost everywhere
(that is the integral in
\eqref{eq:defconv} converges absolutely
for almost all $t\in\mathbb{R}$),
$f * g \in L^{(1/p+1/q-1)^{-1}}(\mathbb{R})$,
and
\begin{equation}\label{neq:Young}
\|f*g\|_{(1/p+1/q-1)^{-1}}
\le \|f\|_p \, \|g\|_q.
\end{equation}

Now we state an inequality for the norm of the
product of two functions $(fg)(t) = f(t) g(t)$. We call it H\"older inequality
for non-conjugate exponents.
If $p\in[1,\infty]$, $q\in[1,\infty]$,
$1/p+1/q\le 1$,
$f\in L^p(\mathbb{R})$,
$g\in L^q(\mathbb{R})$,
then $fg \in L^{(1/p+1/q)^{-1}}(\mathbb{R})$
and
\begin{equation}\label{neq:Holder}
\|fg\|_{(1/p+1/q)^{-1}} \le \|f\|_p \, \|g\|_q.
\end{equation}

Now we state an inequality for the norms
in $L_p[a,b]$ and $L_q[a,b]$.
If $-\infty < a < b < \infty$,
$1 \le p \le q \le \infty$,
$f \in L^q(\mathbb{R})$ and
the $f(t)=0$ for all $t\not\in[a,b]$,
then $f\in L^p(\mathbb{R})$ and
\begin{equation}\label{neq:LL}
\|f\|_p \le (b-a)^{1/p-1/q} \, \|f\|_q .
\end{equation}

\begin{remark}
Conditions for inequalities \eqref{neq:Holder}
and \eqref{neq:LL} are over-restrictive
because of restrictive notation $\|f\|_p$.
This notation can be extended to all $p\in(0,\infty]$
and all measurable functions $f$.
Then the conditions for inequalities \eqref{neq:Holder} and \eqref{neq:LL}
may be relaxed.
 \end{remark}

Inequality \eqref{neq:Young} is proved
in \cite[Theorem 4.2]{LiebLoss};
see item (2) in the remarks after this theorem and part  (A) of its proof.
If $p<\infty$ and $q<\infty$, then
inequality \eqref{neq:Holder} follows
from the conventional H\"older inequality.
Otherwise, if $p=\infty$ or $q=\infty$,
then inequality \eqref{neq:Holder} is trivial.
Inequality \eqref{neq:LL} can be rewritten as
$\|f \indicatorFun_{[a,b]}\!\|_p \le
\|\!\indicatorFun_{[a,b]}\!\|_{(1/p-1/q)^{-1}} \, \|f\|_q $,
and so follows from \eqref{neq:Holder}.

\subsection{Continuity of trajectories and H\"{o}lder condition}
Kolmogorov continuity theorem provides sufficiency conditions
for a stochastic process to have a continuous modification.
The following theorem aggregates Theorems 2, 4 and 5 in
\cite{Bell2015}.
\begin{theorem}[Kolmogorov continuity theorem]
Let $\{X_t,\; t\in[0,T]\}$ be a stochastic process.
If there exist $K \ge 0$,
$\alpha>0$ and $\beta>0$ such that
\[
\ex{|X_t - X_s|^\alpha} \le K\, |t-s|^{1+\beta}
\quad \mbox{for all} \quad
0 \le s \le t \le T,
\]
then
\begin{enumerate}
\item
The process $X$ has a continuous modification;
 \item
Every continuous modification of the process $X$
 whose trajectories
almost surely satisfies H\"older condition
for all exponents  $\gamma \in (0,\: \beta/\alpha)$;
\item
There exists a modification of the process $X$
that satisfies H\"{o}lder condition for exponent
$\gamma \in (0, \: \beta/\alpha)$.
 \end{enumerate}
\end{theorem}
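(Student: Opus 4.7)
The plan is to employ the classical Kolmogorov--Chentsov chaining argument on dyadic rationals. Fix $\gamma \in (0, \beta/\alpha)$ and set $\delta = \beta - \gamma\alpha > 0$. Let $D_n = \set{jT/2^n : 0 \le j \le 2^n}$ and $D = \bigcup_{n\ge 0} D_n$. By Markov's inequality applied to the moment hypothesis,
\[
\pr\!\left(\abs{X_{(j+1)T/2^n} - X_{jT/2^n}} > T^\gamma 2^{-n\gamma}\right) \le K T^{1+\beta-\gamma\alpha}\, 2^{-n(1+\delta)}.
\]
A union bound over the $2^n$ consecutive pairs in $D_n$ and summation over $n$ produces a geometrically convergent series, so Borel--Cantelli gives an almost sure event $\Omega_0$ on which there is a finite (random) level $N(\omega)$ such that every consecutive dyadic increment at every level $n \ge N$ is bounded by $T^\gamma 2^{-n\gamma}$.

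On $\Omega_0$ I would then carry out the chaining step: for $s, t \in D$ with $0 < t - s \le T 2^{-N}$, pick $n \ge N$ with $T2^{-n-1} < t - s \le T2^{-n}$ and write $X_t - X_s$ as a telescoping sum along a path obtained by successively inserting dyadic midpoints at levels $n, n+1, n+2, \dots$. Each level contributes at most two increments of size $T^\gamma 2^{-k\gamma}$, yielding
\[
\abs{X_t - X_s} \le 2 T^\gamma \sum_{k\ge n} 2^{-k\gamma} = \frac{2 T^\gamma}{1-2^{-\gamma}}\, 2^{-n\gamma} \le C_\gamma (t-s)^\gamma
\]
for a deterministic $C_\gamma$. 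Thus the restriction of $X$ to $D$ is, on $\Omega_0$, uniformly $\gamma$-H\"older.

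Next I would define $\widetilde X_t = \lim_{s \to t,\, s\in D} X_s$ on $\Omega_0$ and $\widetilde X_t = 0$ off $\Omega_0$; the limit exists by the H\"older bound and a Cauchy argument. The process $\widetilde X$ has almost surely $\gamma$-H\"older paths and is a modification of $X$: for any fixed $t \in [0,T]$, the moment assumption yields $\ex{\abs{X_{s_n} - X_t}^\alpha} \to 0$ along any sequence $s_n \to t$ in $D$, so $X_{s_n} \to X_t$ in probability, while $\widetilde X_{s_n} = X_{s_n} \to \widetilde X_t$ almost surely, forcing $X_t = \widetilde X_t$ a.s. This settles parts (1) and (3), and intersecting over a sequence $\gamma_m \uparrow \beta/\alpha$ upgrades part (3) to simultaneous H\"older continuity for every exponent in $(0, \beta/\alpha)$. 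For part (2), if $Y$ is any continuous modification, then $Y_t = X_t = \widetilde X_t$ almost surely at each $t \in D$; since $Y$ and $\widetilde X$ are continuous and $D$ is dense, they coincide identically on $[0,T]$ on an almost sure event, so $Y$ inherits the H\"older regularity of $\widetilde X$.

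The main obstacle is the chaining step: the union bound and Borel--Cantelli are routine, but extracting a \emph{deterministic} H\"older constant uniformly over all pairs in $D$ from the per-level bounds requires care with the geometric series and with anchoring the chain at the right level. This step also explains why only $\gamma < \beta/\alpha$ can be achieved (the endpoint is lost when converting $2^{-n(1+\delta)}$ into a summable series), so the three conclusions cannot be sharpened to $\gamma = \beta/\alpha$ by this method.
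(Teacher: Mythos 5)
Your proof is correct, but there is nothing in the paper to compare it against: the paper does not prove this theorem at all, it simply states it as an aggregation of Theorems 2, 4 and 5 of the cited lecture notes \cite{Bell2015}, which themselves carry out the same classical Kolmogorov--Chentsov chaining argument you reproduce. Your write-up covers all three assertions in the right order: Markov plus union bound plus Borel--Cantelli to control consecutive dyadic increments, chaining to get H\"older regularity of $X$ restricted to the dyadics, extension by uniform continuity to a modification $\widetilde X$, identification $\widetilde X_t = X_t$ a.s.\ via convergence in probability, intersection over a sequence $\gamma_m \uparrow \beta/\alpha$ for statement (3), and the density-plus-continuity argument identifying any continuous modification with $\widetilde X$ for statement (2), which also correctly disambiguates the garbled phrasing of that item. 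One small caveat you should make explicit: the chaining estimate with the \emph{deterministic} constant $C_\gamma$ only holds for pairs with $t-s \le T2^{-N(\omega)}$; for pairs farther apart one must concatenate at most $2^{N(\omega)}$ such estimates (or invoke boundedness of the continuous path), so the global H\"older constant on $[0,T]$ is a finite \emph{random} variable, not deterministic. This is the standard formulation of the H\"older condition and does not affect any of the three conclusions, but as written your claim of a deterministic constant is slightly too strong.
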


This theorem can be applied for Gaussian processes.
\begin{corollary}\label{cor:Gausscontp}
Let $\{X_t,\; t\in[0,T]\}$ be a centered Gaussian process.
If there exist $K\ge 0$ and  $\delta > 0$ such that
\[
\ex{(X_t - X_s)^2} \le K \, |t-s|^\delta
\quad \mbox{for all} \quad
0 \le s \le t \le T,
\]
then the following holds true:
\begin{enumerate}
\item The process $X$ has a modification
$\widetilde X$ that has continuous trajectories.
\item For every $\gamma$, $0< \gamma < \frac{1}{2}\delta$,
the trajectories of the process $\widetilde X$
satisfy $\gamma$-H\"{o}lder condition almost surely.
\item The process $X$ has a modification
that satisfies H\"{o}lder condition for all
exponents $\gamma\in(0, \frac{1}{2}\delta)$.
\end{enumerate}
\end{corollary}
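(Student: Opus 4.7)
The plan is to bootstrap the $L^2$ bound into $L^{2n}$ bounds using the fact that for a centered Gaussian random variable $Y$ one has $\ex{Y^{2n}} = (2n-1)!!\,(\ex{Y^2})^n$, and then apply the Kolmogorov continuity theorem stated just above.

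First, since $X_t - X_s$ is centered Gaussian, the hypothesis gives, for every positive integer $n$,
\[
\ex{(X_t - X_s)^{2n}} = (2n-1)!!\,\bprod{\ex{(X_t-X_s)^2}}^n \le (2n-1)!!\,K^n\,|t-s|^{n\delta}.
\]
I would then invoke Kolmogorov's theorem with $\alpha = 2n$ and $1+\beta = n\delta$, which is admissible provided $n > 1/\delta$. This yields a modification whose trajectories almost surely satisfy H\"older condition with every exponent in $\bigl(0,\: (n\delta-1)/(2n)\bigr) = \bigl(0,\: \delta/2 - 1/(2n)\bigr)$. In particular, such a modification exists for $n = \lceil 2/\delta\rceil$, which gives statement (1), and since the H\"older upper bound $\delta/2 - 1/(2n)$ tends to $\delta/2$ as $n\to\infty$, every $\gamma \in (0,\delta/2)$ is covered by choosing $n$ large enough.

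For statements (2) and (3), the one subtlety is that a priori Kolmogorov provides, for each large $n$, a modification $\widetilde X^{(n)}$; these modifications a priori depend on $n$, whereas the statement asks for a single modification good for \emph{all} $\gamma<\delta/2$. This is handled the same way as in the proof of Lemma~\ref{lem:cor:c2pabc_Holder}: any two continuous modifications of $X$ agree almost surely on a dense countable subset of $[0,T]$, hence by continuity they agree identically on $[0,T]$ on a set of full probability. Fix one continuous modification $\widetilde X$ (which exists by (1)); then on the event of full probability where $\widetilde X \equiv \widetilde X^{(n)}$ for every $n$ from a countable sequence $n_k\to\infty$, $\widetilde X$ satisfies H\"older condition up to order $\delta/2 - 1/(2n_k)$ for each $k$, hence up to order $\delta/2$. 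This gives (2), and (3) is then just a restatement.

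The main obstacle is purely bookkeeping: lining up the pathwise statements across the countable family of exponents $n_k$ so that a single modification works for every $\gamma \in (0,\delta/2)$. The calculational input — Gaussian moment control plus Kolmogorov — is immediate, and no further analytic estimate on the process is required.
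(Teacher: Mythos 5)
Your proposal is correct and follows essentially the same route as the paper: both bootstrap the $L^2$ bound to higher moments of the Gaussian increment (you via $\ex{Y^{2n}}=(2n-1)!!\,(\ex{Y^2})^n$, the paper via the formula for $\ex{|Y|^\alpha}$ with real $\alpha$), apply the Kolmogorov continuity theorem with $\beta/\alpha \to \delta/2$, and then pass to a single modification valid for all $\gamma<\delta/2$ by intersecting a countable family of full-probability events. The parameter bookkeeping ($\beta = n\delta - 1$, $n>1/\delta$) and the argument that continuous modifications coincide almost surely are both sound.
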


Since $X_s - X_t$ is a centered Gaussian variable,
\[
\ex{|X_t - X_s|^\alpha} = \frac{2^{\alpha/2}}{\sqrt{\pi}}
\Gamma\!\left(\frac{\alpha+1}{2}\right)
\left(\ex{(X_t - X_s)^2}\right)^{\alpha/2} .
\]
The first statement of the corollary can be proved by applying
Kolmogorov continuity theorem for $\alpha > 2/\delta$ and
$\beta = \frac12 \alpha \delta - 1$.
The second statement of the corollary can be proved by applying
Kolmogorov continuity theorem for $\alpha > \frac{2}{\delta - 2\gamma}$
and $\beta = \frac12 \alpha \delta - 1$.
Consider the random event
\begin{align*}
A &= \left\{ \forall \gamma\in(0, \textstyle{\frac12} \delta) : \widetilde X \;
\mbox{satisfies $\gamma$-H\"older condition} \right\}
\\ &=
\left\{
\forall n\in\mathbb{N} : \widetilde X \;
\mbox{satisfies $
\frac12 \left(1-\frac{1}{n}\right) \delta$-H\"older condition} \right\} .
\end{align*}
  (The measurability of $A$ follows from
  the continuity of the process $\widetilde X$).
By the second statement of Corollary~\ref{cor:Gausscontp}  $\pr(A) = 1$.
Thus, $\{\widetilde X_t \ind{A}, \;
t\in[0,t]\}$ is the desired modification which satisfies
H\"older condition for all exponents
$\gamma\in(0, \frac12 \delta)$.

\begin{remark}
\begin{enumerate}
\item
Corollary~\ref{cor:Gausscontp} holds true even without assumption
that the Gaussian process $X$ is centered.

\item
The first statement of Corollary~\ref{cor:Gausscontp}
can be proved with Xavier Fernique's continuity criterion \cite{fernique}
as well.
\end{enumerate}
\end{remark}

\begin{lemma}\label{lem:GContF}
Let $\{X_t, \; t\in [0, T]\}$ be a centered Gaussian process.
Suppose that there exist $\delta > 0$ and a nondecreasing continuous function
$F : [0, T] \to \mathbb{R}$ such that
\begin{equation}\label{eq:lemGContF}
\ex{(X_t - X_s)^2} \le (F(t) - F(s))^\delta
\quad \mbox{for all} \quad
0 \le s \le t \le T.
\end{equation}
Then
\begin{enumerate}
\item The process $X$ have a modification
$\widetilde X$ that has continuous trajectories.
\item If the function $F$
satisfies Lipschitz condition
in an interval $[a,b] \subset [0,T]$,
then for every $\gamma$, $0< \gamma < \frac{1}{2}\delta$,
the process $\widetilde X$ has a modification whose
trajectories satisfy $\gamma$-H\"{o}lder property
on the interval $[a, b]$.
\end{enumerate}
\end{lemma}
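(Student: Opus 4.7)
\textbf{Proof plan for Lemma~\ref{lem:GContF}.}
My plan is to reduce both statements to Corollary~\ref{cor:Gausscontp} by a deterministic time change through $F$. The subtle point is that $F$ need not be strictly increasing, so I first have to say precisely what the time-changed process is on the flat parts of $F$. The hypothesis \eqref{eq:lemGContF} forces $X$ to be almost surely constant on every interval where $F$ is constant (the bound becomes $0$), so this subtlety is manageable but is the main technical point I would treat carefully.

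Concretely, without loss of generality assume $F(0) = 0$. Define a right inverse by $\tau(s) = \inf\{t \in [0,T] : F(t) \ge s\}$ for $s \in [0, F(T)]$; by continuity of $F$ we have $F(\tau(s)) = s$. Set $Y_s = X_{\tau(s)}$ for $s \in [0, F(T)]$. Then $Y$ is a centered Gaussian process, and for $0 \le s_1 < s_2 \le F(T)$,
\[
\ex{(Y_{s_2} - Y_{s_1})^2} = \ex{(X_{\tau(s_2)} - X_{\tau(s_1)})^2}
\le (F(\tau(s_2)) - F(\tau(s_1)))^\delta = (s_2 - s_1)^\delta .
\]
By Corollary~\ref{cor:Gausscontp}, $Y$ admits a modification $\widetilde Y$ with continuous trajectories that satisfy H\"older condition for every exponent $\gamma \in \bigl(0,\tfrac12 \delta\bigr)$ almost surely.

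Now define $\widetilde X_t = \widetilde Y_{F(t)}$ for $t \in [0,T]$. Since $F$ and $\widetilde Y$ are continuous, $\widetilde X$ has continuous trajectories. To check it is a modification of $X$, fix $t \in [0,T]$: on one hand $\widetilde X_t = \widetilde Y_{F(t)} = Y_{F(t)} = X_{\tau(F(t))}$ almost surely (because $\widetilde Y$ is a modification of $Y$), and on the other hand $F(\tau(F(t))) = F(t)$, so by \eqref{eq:lemGContF} $X_{\tau(F(t))} = X_t$ almost surely. Thus $\widetilde X_t = X_t$ a.s., which proves statement~1.

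For statement~2, assume $F$ is $L$-Lipschitz on $[a,b] \subset [0,T]$. For every exponent $\gamma \in \bigl(0,\tfrac12 \delta\bigr)$, the H\"older property of $\widetilde Y$ yields a (random) constant $C < \infty$ such that $|\widetilde Y_{u_2} - \widetilde Y_{u_1}| \le C\,|u_2 - u_1|^\gamma$ for all $u_1, u_2 \in [0, F(T)]$. Hence for any $s, t \in [a,b]$,
\[
|\widetilde X_t - \widetilde X_s| = |\widetilde Y_{F(t)} - \widetilde Y_{F(s)}| \le C\,|F(t) - F(s)|^\gamma \le C L^{\gamma}\,|t-s|^\gamma,
\]
so the trajectories of $\widetilde X$ already satisfy the $\gamma$-H\"older condition on $[a,b]$; multiplying by the indicator of the (probability-one) event on which the H\"older bound holds produces the required modification. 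The only genuinely delicate step is the first paragraph's treatment of flat intervals of $F$, and the inf-definition of $\tau$ together with \eqref{eq:lemGContF} handles it cleanly.
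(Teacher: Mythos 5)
Your proof is correct and follows essentially the same route as the paper: a deterministic time change through $F$ followed by an application of Corollary~\ref{cor:Gausscontp}. The only real difference is in the treatment of the flat intervals of $F$ --- you use the right inverse $\tau$ together with the degeneracy of the bound \eqref{eq:lemGContF}, while the paper sidesteps the issue by replacing $F(t)$ with $F(t)+t$, which is strictly increasing and still dominates the increments; likewise your statement~2 (transferring the H\"older modulus of $\widetilde Y$ through the Lipschitz $F$) is an equivalent variant of the paper's direct reapplication of the corollary to $X$ on $[a,b]$ via $\ex{(X_t-X_s)^2}\le L^\delta (t-s)^\delta$.
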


\begin{proof}
Without loss of generality, we can assume that the function
$F$ is strictly increasing.
Indeed, if the condition \eqref{eq:lemGContF}
holds true for $F$ being continuous nondecreasing function $F_1$,
it also holds true for $F=F_2$ with $F_2(t)=F_1(t)+t$,
where $F_2$ is a continuous strictly increasing function.

With this additional assumption, the inverse function
$F^{-1}$ is one-to-one, strictly increasing continuous function
$[F(0), F(T)] \to [0, T]$.
Consider a stochastic process
$\{Y_u, \; u \in [F(0), F(T)]\}$,
with
$Y_u = Y_{F^{-1}(u)}$.
The stochastic process $Y$ is centered and Gaussian;
it satisfies condition
\[
\ex{(Y_v - Y_u)^2}
= \ex{(X_{F^{-1}(v)} - X_{F^{-1}(u)})^2}
\le (F(F^{-1}(v)) - F(F^{-1}(u)))^\delta = (v - u)^\delta
\]
for all $F(0) \le u \le v \le F(T)$.
According to Corollary~\ref{cor:Gausscontp},
the process $Y$ has a modification $\widetilde Y$ with continuous trajectories.
Then $\widetilde X$ with $\widetilde X_t = \widetilde Y_{F(t)}$
is a modification of the process $X$ with continuous trajectories.

The second statement of the lemma is a direct consequence
of Corollary~\ref{cor:Gausscontp}. If the function $F$
satisfies Lipschitz condition with constant $L$ on the interval $[a,b]$,
then
\[
\ex{(X_t - X_s)^2} \le L^\delta (t - s)^\delta
\quad \mbox{for all} \quad a \le s \le t \le b ,
\]
which is the main condition for Corollary~\ref{cor:Gausscontp}.
\end{proof}

\subsection{Application of fractional calculus}
The lower and upper Riemann--Liouville fractional integrals of a function
$f\in L^{1}[a,b]$
are defined as follows:
\begin{gather*}
(I_{a+}^\alpha f) (x) = \frac{1}{\Gamma(\alpha)}
\int_a^x \frac{f(t) \, dt}  {(x-t)^{1-\alpha}}, \qquad
(I_{b-}^\alpha f) (x) = \frac{1}{\Gamma(\alpha)}
\int_x^b \frac{f(t) \, dt}  {(t-x)^{1-\alpha}}.
\end{gather*}
The integrals $(I_{a+}^\alpha f) (x)$
and $(I_{b-}^\alpha f) (x)$
are well-defined for almost all $x\in[a,b]$,
and are integrable functions of $x$,
that is
$I_{a+}^\alpha f \in L^1[a,b]$ and
$I_{b-}^\alpha f \in L^1[a,b]$.
Thus, $I_{a+}^\alpha$ and $I_{b-}^\alpha$
might be considered linear operators
$L^1[a,b] \to L^1[a,b]$.

A reflection relation for functions $g(x) = f(a+b-x)$
imply the following relation for their fractional
integrals:
\begin{equation}\label{eq:refl}
(I_{b-}^{\alpha} g)(x) = (I_{a+}^{\alpha} f) (a+b-x);
\end{equation}
see \cite[Chapter~1, Section~2.3]{Samko}.

The integration-by-parts formula is   given, e.g.,
in \cite[Chapter~1, Section~2.3]{Samko}.
\begin{proposition}[integration-by-parts formula]\label{prop:fipi}
Let $\alpha > 0$,
$f \in L^p[a,b]$,
$g \in L^q[a,b]$,
$p \in [1, +\infty]$,
$q \in [1, +\infty]$,
while $\frac{1}{p} + \frac{1}{q} \le 1 + \alpha$
and $\max\Bigl(1 + \alpha - \frac{1}{p} - \frac{1}{q},
\:\allowbreak  \min\!\left(1 - \frac{1}{p}, \:
1 - \frac{1}{q}\right)\Bigr) > 0$.
Then
\[
\int_a^b (I_{a+}^\alpha f)(t) \, g(t) \, dt =
\int_a^b f(t) \, (I_{b-}^\alpha g)(t) \, dt.
\]
\end{proposition}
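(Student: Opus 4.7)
The plan is to derive the identity from Fubini's theorem. Unfolding the definitions of $I_{a+}^\alpha$ and $I_{b-}^\alpha$, both sides of the proposed equality are iterated integrals of the same kernel
\[
\frac{1}{\Gamma(\alpha)}\,\frac{f(s)\,g(t)}{(t-s)^{1-\alpha}}\,\indicatorFun_{\{a\le s \le t\le b\}}
\]
over the triangle $\{(s,t): a \le s \le t \le b\}$, evaluated in opposite orders. Hence the entire content of the proposition reduces to justifying the interchange, i.e., to showing that
\[
J \;:=\; \int_a^b \int_a^t \frac{|f(s)|\,|g(t)|}{(t-s)^{1-\alpha}}\, ds\, dt \;<\; \infty
\]
under the stated exponent conditions.

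In the open range $1/p + 1/q < 1 + \alpha$ (the first branch of the max-hypothesis), I would bound $J$ by a combination of Young's convolution inequality and H\"older's inequality. Extending $f$ and $g$ by zero outside $[a,b]$ and setting $\phi(u) = u^{\alpha-1}\indicatorFun_{(0,\,b-a)}(u)$, one has $J = \int_{\mathbb{R}}(\phi * |f|)(t)\,|g(t)|\,dt$. Applying H\"older \eqref{neq:Holder} with the conjugate pair $(q,\,q/(q-1))$ and then Young \eqref{neq:Young} to the convolution factor forces the exponent of $\|\phi\|_s$ to satisfy $1/s = 2 - 1/p - 1/q$, while $\phi \in L^s$ holds iff $s(1-\alpha) < 1$; the algebra collapses exactly to the strict bound $1/p + 1/q < 1 + \alpha$. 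A shorter alternative when $1/p < \alpha$ is to note from H\"older in the defining integral that $I_{a+}^\alpha f$ is uniformly bounded, making $J < \infty$ immediate.

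The main obstacle is the critical equality $1/p + 1/q = 1 + \alpha$, which the hypothesis still permits provided $\min(1-1/p,\,1-1/q) > 0$, i.e., both $p$ and $q$ strictly between $1$ and $\infty$. On this boundary Young is not sharp enough, and I would instead invoke the Hardy--Littlewood--Sobolev inequality in the form $\|I_{a+}^\alpha f\|_{q/(q-1)} \le C\,\|f\|_p$, which holds precisely when $1 < p, q < \infty$ and $1/p + 1/q - 1 = \alpha$. Coupled with H\"older applied to $\int |I_{a+}^\alpha f|\,|g|$, this again gives $J < \infty$. The two branches of the composite hypothesis $\max\bigl(1 + \alpha - 1/p - 1/q,\; \min(1-1/p,\, 1-1/q)\bigr) > 0$ thus correspond exactly to the Young--H\"older regime and the HLS endpoint regime; outside their union Fubini genuinely fails. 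As a shortcut the identity can simply be cited from \cite[Chapter~1, Section~2.3]{Samko}.
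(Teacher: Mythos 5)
The paper does not actually prove this proposition: it states it and refers the reader to \cite[Chapter~1, Section~2.3]{Samko}, so your citation shortcut at the end is precisely what the authors do. Your self-contained argument is therefore a genuinely different (and more informative) route, and its structure is sound: both sides are iterated integrals of the kernel $\Gamma(\alpha)^{-1}f(s)g(t)(t-s)^{\alpha-1}\indicatorFun_{\{a\le s\le t\le b\}}$, so everything reduces to the finiteness of $J$; the subcritical regime $1/p+1/q<1+\alpha$ is handled by H\"older plus Young, and the critical line $1/p+1/q=1+\alpha$ with $p,q\in(1,\infty)$ is exactly the diagonal case of Hardy--Littlewood--Sobolev with kernel exponent $\lambda=1-\alpha\in(0,1)$ (note that on this line the hypotheses automatically force $p,q<\infty$ and $\alpha<1$, so HLS applies). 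The one wrinkle you should patch is in the subcritical branch: your Young exponent $1/s=2-1/p-1/q$ exceeds $1$ whenever $1/p+1/q<1$, so $s<1$ and Young's inequality \eqref{neq:Young} as stated (which needs all exponents in $[1,\infty]$) does not apply directly. The fix is cheap because the interval is bounded: replace $p$ by $\tilde p\le p$ with $1/\tilde p=\max\bigl(1/p,\,1-1/q\bigr)$, so that $f\in L^{\tilde p}[a,b]$ and the new $s$ satisfies $1\le s$ together with $s(1-\alpha)<1$ exactly when $1/p+1/q<1+\alpha$; alternatively observe that $\phi\in L^1$ always and treat large $q$ separately, as in your ``$1/p<\alpha$'' remark. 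With that adjustment the proof is complete; what it buys over the paper's citation is an explicit explanation of why the odd-looking hypothesis $\max\bigl(1+\alpha-\tfrac1p-\tfrac1q,\ \min(1-\tfrac1p,\,1-\tfrac1q)\bigr)>0$ is the natural union of the Young--H\"older and HLS regimes.
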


Now we establish conditions for a function
to be in the range of the fractional
operator $I_{a+}^\alpha$, and we provide
formulas for the preimage, which is called a fractional derivative.
The following statements are the modifications
of the Theorem 2.1 and following corollary in
\cite[Chapter 1]{Samko}.
The formulas for the fractional derivative
are also provided in \cite[Section 2.5]{ManPol2000}.

\begin{theorem}\label{thm:thmeqAbel}
Let $0 < \alpha < 1$.
Consider the integral equation
\begin{equation}\label{eq:AbelEq}
I_{a+}^\alpha f = g
\end{equation}
with unknown function $f \in L^1[a,b]$
and known function (i.e., a parameter) $g \in L^1[a,b]$.
Denote
\[
h(x) = \begin{cases} (I_{a+}^{1-\alpha} g) (x)
& \mbox{if $a < x \le b$}, \\
0
& \mbox{if $x=a$}.
\end{cases}
\]
If $h \in \AC[a,b]$, then equation \eqref{eq:AbelEq}
has a unique (up to equality almost everywhere in $[a,b]$) solution $f$,
namely $f(x) = h'(x)$.
Otherwise, if $h \not\in \AC[a,b]$, then equation \eqref{eq:AbelEq}
has no solutions in $L^1[a,b]$.
If for some $x\in(a,b]$
the integral $(I_{a+}^{1-\alpha} g) (x)$ is not
well-defined,
 then equation \eqref{eq:AbelEq}
does not have solutions in $L^1[a,b]$.
\end{theorem}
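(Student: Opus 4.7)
The plan is to reduce Abel's equation $I_{a+}^\alpha f = g$ to the elementary question of when a function equals an indefinite integral, using the semigroup property of Riemann--Liouville fractional integrals.

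First I would apply the operator $I_{a+}^{1-\alpha}$ to both sides of $I_{a+}^\alpha f = g$. The key identity is the semigroup (composition) property $I_{a+}^{1-\alpha}\, I_{a+}^\alpha f = I_{a+}^{1} f$, valid for $f\in L^1[a,b]$; this follows from Fubini's theorem and the beta-integral $\int_t^x (x-s)^{-\alpha}(s-t)^{\alpha-1}ds = \Gamma(\alpha)\Gamma(1-\alpha)$. Applied to $I_{a+}^\alpha f = g$, this yields $\int_a^x f(t)\,dt = (I_{a+}^{1-\alpha}g)(x) = h(x)$ for a.e.\ $x\in[a,b]$, and because $\int_a^x f(t)\,dt$ extends by continuity to the value $0$ at $x=a$, the equality in fact coincides with the definition of $h$ on all of $[a,b]$.

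From this representation I read off both the necessary and sufficient conditions. On the one hand, the function $x\mapsto \int_a^x f(t)\,dt$ is absolutely continuous on $[a,b]$, so any solution in $L^1[a,b]$ forces $h\in \AC[a,b]$; moreover if $(I_{a+}^{1-\alpha}g)(x)$ fails to be defined for some $x\in(a,b]$, no such $f$ can exist. Differentiating a.e.\ then forces $f(x) = h'(x)$, which establishes uniqueness. On the other hand, if $h \in \AC[a,b]$, I would set $f := h'\in L^1[a,b]$ and verify $I_{a+}^\alpha f = g$: applying $I_{a+}^\alpha$ to the identity $h(x) = \int_a^x f(t)\,dt = (I_{a+}^{1}f)(x)$ and using the semigroup property in reverse, $I_{a+}^{\alpha}I_{a+}^{1-\alpha}g = I_{a+}^{1}g$, I obtain $(I_{a+}^{\alpha}f)(x) = (I_{a+}^{\alpha}h')(x)$; since $h(a) = 0$ and $h\in\AC[a,b]$, integration by parts gives $I_{a+}^{\alpha}h'(x) = g(x)$ a.e., provided the composition relation is handled with care.

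The step I expect to be most delicate is the rigorous justification of the two semigroup identities $I_{a+}^{1-\alpha}I_{a+}^{\alpha}f = I_{a+}^{1}f$ and $I_{a+}^{\alpha}I_{a+}^{1-\alpha}g = I_{a+}^{1}g$ in the $L^1$-framework, together with the verification that the Fubini interchange is legitimate for all $g\in L^1[a,b]$ (not only almost everywhere, but at every point where $h$ is defined). A natural way to cope with this is to approximate $g$ in $L^1[a,b]$ by continuous functions, for which the semigroup property is classical, and then pass to the limit using the continuity of $I_{a+}^\beta:L^1[a,b]\to L^1[a,b]$ for $\beta\in(0,1]$. Uniqueness of $f$ in $L^1[a,b]$ then follows either from the injectivity of $I_{a+}^{1}$ applied to $f_1-f_2$, or directly from the established formula $f = h'$.
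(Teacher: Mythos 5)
The paper does not actually prove this theorem: it states that the result is a modification of Theorem~2.1 (and its corollary) in \cite{Samko} and leaves the argument to that reference. Your proposal reconstructs precisely that classical argument --- apply $I_{a+}^{1-\alpha}$, use the semigroup identity $I_{a+}^{1-\alpha}I_{a+}^{\alpha}f=I_{a+}^{1}f$ (justified by Tonelli and the beta integral) to conclude $h(x)=\int_a^x f(t)\,dt$, read off necessity of $h\in\AC[a,b]$ and the formula $f=h'$, and reverse the steps for sufficiency --- and it is sound. The only step worth tightening is your sufficiency verification, where ``integration by parts'' is not quite the right mechanism: the clean finish is to note that $I_{a+}^{1}h'=h=I_{a+}^{1-\alpha}g$ (using $h\in\AC$, $h(a)=0$), apply $I_{a+}^{\alpha}$ to both sides, commute via the semigroup property to get $I_{a+}^{1}\bigl(I_{a+}^{\alpha}h'\bigr)=I_{a+}^{1}g$, and invoke the injectivity of $I_{a+}^{1}$ on $L^1[a,b]$ to conclude $I_{a+}^{\alpha}h'=g$ a.e.; the approximation by continuous functions you mention is unnecessary, since Tonelli already gives the semigroup identity pointwise wherever the iterated integrals converge absolutely.
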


\begin{corollary}\label{cor:coreqAbel}
Let $0 < \alpha < 1$.
The integral equation
\eqref{eq:AbelEq} with unknown function
$f \in L^1[a,b]$ and known function $g \in \AC[a,b]$
has a unique solution.
The solution is equal to
\begin{align*}
f(x) &=
(I_{a+}^{1-\alpha}(g'))(x) +
\frac{g(a)}{\Gamma(1-\alpha)\,(x-a)^\alpha}
\\ &=
\frac{1}{\Gamma(1-\alpha)}
\left( \int_a^x \frac{g'(t)\,dt} {(x-t)^\alpha}
+ \frac{g(a)}{(x-a)^\alpha} \right) .
\end{align*}
\end{corollary}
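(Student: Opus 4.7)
The strategy is to apply Theorem~\ref{thm:thmeqAbel} directly. The hypothesis $g\in\AC[a,b]$ gives the decomposition $g(t)=g(a)+\int_a^t g'(s)\,ds$ with $g'\in L^1[a,b]$, so we only need to identify the function $h(x)=(I_{a+}^{1-\alpha}g)(x)$ explicitly, check that it belongs to $\AC[a,b]$, and differentiate it to read off $f$.

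First I would plug the decomposition of $g$ into the definition of $h$, splitting it as
\[
h(x)=\frac{g(a)}{\Gamma(1-\alpha)}\int_a^x (x-t)^{-\alpha}\,dt+\frac{1}{\Gamma(1-\alpha)}\int_a^x(x-t)^{-\alpha}\!\!\int_a^t g'(s)\,ds\,dt.
\]
The first term is elementary and equals $\dfrac{g(a)(x-a)^{1-\alpha}}{\Gamma(2-\alpha)}$. In the second term, Fubini's theorem applies (the integrand is absolutely integrable on the triangle $\{a\le s\le t\le x\}$ because $g'\in L^1$ and $(x-t)^{-\alpha}$ is integrable), so swapping the order of integration and carrying out the inner $t$-integral yields
\[
\frac{1}{\Gamma(2-\alpha)}\int_a^x (x-s)^{1-\alpha}g'(s)\,ds=(I_{a+}^{2-\alpha}g')(x).
\]
Hence $h(x)=\dfrac{g(a)(x-a)^{1-\alpha}}{\Gamma(2-\alpha)}+(I_{a+}^{2-\alpha}g')(x)$.

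Next I would verify $h\in\AC[a,b]$. The first summand is $\AC$ since $1-\alpha>0$. For the second, the identity $(I_{a+}^{2-\alpha}g')(x)=\int_a^x (I_{a+}^{1-\alpha}g')(t)\,dt$ (obtained by a one-line Fubini argument on $(I_{a+}^1I_{a+}^{1-\alpha}g')(x)$) together with the fact that the operator $I_{a+}^{1-\alpha}$ maps $L^1[a,b]$ into $L^1[a,b]$ (stated earlier in the excerpt) shows it is $\AC$. Consequently Theorem~\ref{thm:thmeqAbel} applies and gives the unique $L^1$ solution $f(x)=h'(x)$.

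Finally I would differentiate. The first summand contributes $\dfrac{g(a)}{\Gamma(1-\alpha)(x-a)^\alpha}$. For the second, the representation $(I_{a+}^{2-\alpha}g')(x)=\int_a^x (I_{a+}^{1-\alpha}g')(t)\,dt$ shows that its a.e.\ derivative equals $(I_{a+}^{1-\alpha}g')(x)$. Combining these gives the claimed formula. The only point requiring any care is the Fubini swap and the verification of the $\AC$ property for $(I_{a+}^{2-\alpha}g')$; both are routine once one remembers the convolution-type identity for iterating fractional integrals, so no genuine obstacle arises.
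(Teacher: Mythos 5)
Your argument is correct: the paper itself gives no proof of this corollary (it defers to Samko et al.), and your derivation --- decomposing $g(t)=g(a)+\int_a^t g'(s)\,ds$, computing $h=I_{a+}^{1-\alpha}g=\frac{g(a)(x-a)^{1-\alpha}}{\Gamma(2-\alpha)}+I_{a+}^{2-\alpha}g'$ via Fubini, verifying $h\in\AC[a,b]$ through the semigroup identity $I_{a+}^{2-\alpha}g'=I_{a+}^{1}I_{a+}^{1-\alpha}g'$, and then differentiating --- is exactly the standard route from Theorem~\ref{thm:thmeqAbel} that the cited reference follows. All steps (the Fubini swap, the integrability of $I_{a+}^{1-\alpha}g'$, the vanishing of $h$ at $a$) check out, so no gap remains.
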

\subsection{Existence of the solution to Volterra integral equation
where the integral operator is an operator
of convolution with integrable singularity
at 0}\label{subsect-4}
Consider Volterra integral equation of the first kind
\begin{equation}\label{eq:Volterra1}
\int_0^x f(t) \, g(x-t) \, dt = y(x), \qquad x \in (0,T],
\end{equation}
with $g(x)$ and $y(x)$ known (parameter) functions
and $f(x)$ unknown function.
Suppose that the function $g(x)$ is integrable
in the interval $(0,T]$ but
behaves asymptotically as a power function in
the neighborhood of $0$:
\[
g(x) \sim  \frac{K}{x^{1-\alpha}}, \qquad x \to 0,
\]
where $0 < \alpha < 1$.
More specifically, assume that
$g(x)$ admits a representation
\begin{equation}\label{eq:reprg}
g(x) = \frac{1}{\Gamma(\alpha) x^{1-\alpha}}
+ (I^{\alpha}_{0+} h)(x) =
\frac{1}{\Gamma(\alpha)}
\left( \frac{1}{x^{1-\alpha}} + \int_0^x \frac{h(t)\, dt}
{(x-t)^{1-\alpha}} \right),
\end{equation}
where
$\Gamma(\alpha)$ is a gamma function,
$I^{\alpha}_{0+}h$ is a lower Riemann--Liouville
fractional integral of $h$,
\[
(I^{\alpha}_{0+} h)(x) =
\frac{1}{\Gamma(\alpha)}
\int_0^x \frac{h(t)\, dt} {(x-t)^{1-\alpha}}
,
\]
and $h(x)$ is a absolutely continuous function.

The sufficient conditions for existence
and uniqueness of the solution to integral equation
claimed in \cite[Section 2.1-2]{ManPol2000}
are not satisfied.
The kernel of the integration operator in \eqref{eq:Volterra1}
is unbounded, and
$y(0)$ might be nonzero.

But we use Remark~2 in
\cite[Section 2.1-2]{ManPol2000}.
We reduce the Volterra integral equation of the
first kind to a Volterra integral equation
of the second kind
similarly as it is done for
regular functions $g(x)$;
compare with ~\cite[Section 2.3]{ManPol2000}
for the case of regular $g(x)$.

For the next theorem we keep in mind that if
a function $f$ is a solution to \eqref{eq:Volterra1},
then every function that is equal to $f$
almost everywhere on $[0,T]$
is also a solution to \eqref{eq:Volterra1}.
\begin{theorem}\label{thm:EUVolterra1}
Let $y,\,h \in C^1[0,T]$ and $g$ be defined in \eqref{eq:reprg}.
Then the equation \eqref{eq:Volterra1} has
a unique (up to equality almost everywhere)
solution $f \in L^1[0,T]$.
The solution is (more precisely, some of almost-everywhere equal solutions are)
continuous in the left-open interval $(0,T]$.
\end{theorem}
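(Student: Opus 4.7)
The plan is to reduce \eqref{eq:Volterra1} to a Volterra equation of the second kind whose unknown is the Riemann--Liouville fractional integral of $f$, solve that second-kind equation in $C^1[0,T]$ by standard resolvent theory, and recover $f$ by inverting an Abel equation via Corollary~\ref{cor:coreqAbel}.

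First I would exploit the convolution structure. Writing $\phi_\alpha(t) = t^{\alpha-1}/\Gamma(\alpha)$ for the convolution kernel of $I^\alpha_{0+}$, the decomposition \eqref{eq:reprg} says $g = \phi_\alpha + \phi_\alpha * h$, so by associativity and commutativity of convolution, for $f\in L^1[0,T]$ (justified by Fubini),
\[
f * g \;=\; \phi_\alpha * f \;+\; (\phi_\alpha * h) * f \;=\; I^\alpha_{0+} f \;+\; h * I^\alpha_{0+} f .
\]
Setting $F = I^\alpha_{0+} f \in L^1[0,T]$, the original equation \eqref{eq:Volterra1} becomes
\[
F(x) + \int_0^x h(x-t)\,F(t)\,dt \;=\; y(x), \qquad x\in(0,T],
\]
which is a Volterra equation of the second kind with continuous (hence bounded) kernel $h$.

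Next, standard Picard-iteration/resolvent theory for second-kind Volterra equations with bounded continuous kernel gives a unique solution $F\in C[0,T]$ for any $y\in C[0,T]$. Differentiating the equivalent form $F = y - h*F$ yields
\[
F'(x) \;=\; y'(x) - h(0)\,F(x) - \int_0^x h'(x-t)\,F(t)\,dt,
\]
whose right-hand side is continuous in $x$ whenever $y,h\in C^1[0,T]$; hence $F\in C^1[0,T]$ with $F(0)=y(0)$. Uniqueness for the first-kind equation in $L^1[0,T]$ follows by running the same reduction on the difference of two solutions and invoking uniqueness of both the second-kind equation (via Gronwall) and the Abel inversion.

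Finally, the equation $I^\alpha_{0+} f = F$ with $F\in C^1[0,T]\subset \AC[0,T]$ is precisely the Abel equation of Corollary~\ref{cor:coreqAbel}, which delivers the unique (up to a.e.\ equality) $L^1$-solution
\[
f(x) \;=\; \frac{1}{\Gamma(1-\alpha)}\left(\int_0^x \frac{F'(t)\,dt}{(x-t)^\alpha} + \frac{F(0)}{x^\alpha}\right) ,
\]
whose canonical representative is continuous on $(0,T]$ since both summands are continuous there. The main technical point is ensuring no regularity is lost across the three equivalent formulations: the convolution manipulation is legitimate for $L^1$ data, the $C^1$-regularity propagates through the second-kind equation (which is the one place where the hypothesis $y,h\in C^1[0,T]$ is essential), and the possibly nonzero value $F(0)=y(0)$ is absorbed harmlessly into the integrable term $F(0)/x^\alpha$, integrability being guaranteed by $\alpha<1$.
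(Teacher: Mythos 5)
Your proof is correct and follows essentially the same route as the paper: reduce \eqref{eq:Volterra1} to the second-kind Volterra equation $F + h*F = y$ for $F = I^{\alpha}_{0+}f$, solve it and upgrade $F$ to $C^1[0,T]$ by differentiating, then invert the Abel equation via Corollary~\ref{cor:coreqAbel} to obtain the explicit formula for $f$ and its continuity on $(0,T]$. The only cosmetic difference is that you justify the key rearrangement $(\phi_\alpha * h) * f = h * (\phi_\alpha * f)$ by associativity and commutativity of convolution, whereas the paper reaches the same identity via the reflection relation \eqref{eq:refl} together with the fractional integration-by-parts formula (Proposition~\ref{prop:fipi}).
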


\begin{proof}
Substitute \eqref{eq:reprg} into \eqref{eq:Volterra1}:
\begin{gather*}
\int_0^x f(t)
\left( \frac{1}{\Gamma(\alpha) (x-t)^{1-\alpha}}
+ (I^{\alpha}_{0+} h)(x-t) \right) dt
= y(x), \\
(I^{\alpha}_{0+} f)(x) +
\int_0^x f(t) \, (I^{\alpha}_{0+} h)(x-t) \, dt
= y(x) .
\end{gather*}
Denote $h_x(t) = h(x-t)$.
According to equation \eqref{eq:refl},
 the fractional integrals of $h$ and $h_x$
satisfy the relation
$(I^{\alpha}_{0+} h)(x-t)
= (I^{\alpha}_{x-} h_x)(t)$.
Hence, equation \eqref{eq:Volterra1}
is equivalent to the following one:
\begin{equation}\label{eq:Volterra1-106}
(I^{\alpha}_{0+} f)(x) +
\int_0^x f(t) \, (I^{\alpha}_{x-} h_x)(t) \, dt
= y(x).
\end{equation}

Now apply the integration-by-parts formula. We have $f \in
L^1[0,x]$, $h_x \in L^\infty[0,x]$, and $1 + 0 < 1 + \alpha$.
Hence, by Proposition~\ref{prop:fipi},
\[
\int_0^x f(t) \, (I^{\alpha}_{x-} h_x)(t) \, dt =
\int_0^x (I^{\alpha}_{0+} f)(t) \, h_x(t) \, dt.
\]

It means that   equation \eqref{eq:Volterra1-106} is equivalent to the following ones:
\begin{gather*}
(I^{\alpha}_{0+} f)(x) +
\int_0^x (I^{\alpha}_{0+} f)(t) \,  h_x(t) \, dt
= y(x), \end{gather*}
and \begin{gather*}
(I^{\alpha}_{0+} f)(x) +
\int_0^x (I^{\alpha}_{0+} f)(t) \,  h(x-t) \, dt
= y(x) .
\end{gather*}

Denote
$F = I^{\alpha}_{0+} f$,
and obtain a Volterra integral equation of the second kind:
\begin{equation}\label{eq:Volterra2}
F(x) = y(x) - \int_0^x F(t) \,  h(x-t) \, dt.
\end{equation}
Equation \eqref{eq:Volterra2}
has a unique solution in $C[0,T]$,
as well as in $L^1[0,T]$.
In   other words,
\eqref{eq:Volterra2} has a unique
integrable solution, and this solution
is a continuous function.

According to Theorem~\ref{thm:thmeqAbel},
either unique (up to almost-everywhere equality) function $f$, or no functions $f$ correspond to
the function $F$.
Thus, all integrable solution to integral equation \eqref{eq:Volterra1}
are equal almost everywhere.

Now we construct a solution to equation
\eqref{eq:Volterra1} that is continuous and
integrable on $(0,T]$.
Differentiating \eqref{eq:Volterra2}, we obtain
\[
F'(x) = y'(x) - F(x)\, h(0) - \int_0^x F(t) \, h'(x-t) \, dt,
\]
whence $F \in C^1[0,T]$.
According to Corollary~\ref{cor:coreqAbel},
the integral equation $F = I_{0+}^\alpha f$
has a unique solution $f\in L^1[0,T]$,
which is equal to

\begin{equation}\label{eq:freprofF}
f(x)
= \frac{1}{\Gamma(1-\alpha)}
\left( \int_0^x \frac{F'(t)\,dt}{(x-t)^\alpha} +
\frac{F(0)}{x^\alpha} \right) .
\end{equation}
The constructed function $f(x)$
is continuous and integrable in $(0,T]$,
and $f(x)$ is a solution to \eqref{eq:Volterra1}.
\end{proof}

\begin{remark}
In Theorem~\ref{thm:EUVolterra1}
the condition $h \in C^1[0,T]$
can be relaxed and replaced with
the condition $h \in \AC[0,T]$.
In other words, if the function $h$
is absolutely continuous
but is not continuously differentiable,
the statement of Theorem~\ref{thm:EUVolterra1}
still holds true.
\end{remark}

\subsubsection
[Example: $g(x) = \exp(\beta x) x^{\alpha-1} / \Gamma(\alpha)$
and $y(x) = 1$]
{Example: \boldmath$g(x) = \exp(\beta x) x^{\alpha-1} / \Gamma(\alpha)$
and $y(x) = 1$}\label{sect:Example}
It is well known that
\begin{equation}\label{eq:con1}
\int_0^x \frac{1}{\Gamma(1-\alpha) t^\alpha} \,
\frac{1}{\Gamma(\alpha) (x-t)^{1-\alpha}} \, dt = 1 .
\end{equation}

In this section, we prove that the equation
\begin{equation}\label{eq:con2}
\int_0^x f(t) \,
\frac{e^{(x-t)\beta }}{\Gamma(\alpha) (x-t)^{1-\alpha}} \, dt = 1
\end{equation}
has an integrable solution.
According to \eqref{eq:con1},
$f(x) = x^{-\alpha} / \, \Gamma(1-\alpha)$
is a solution to \eqref{eq:con2} if $\beta = 0$.

Denote
\begin{equation}\label{eq:concreteg}
g(x) = \frac{\exp(\beta x)} {\Gamma(\alpha) x^{1-\alpha}} .
\end{equation}
Demonstrate that $g(x)$ admits a representation \eqref{eq:reprg}.
To construct $h$, we need
Kummer confluent hypergeometric function
\cite{Wolfram1F1}:
\[
\hyperIFI(a;b;z) = \frac{1}{\mathrm{B}(a,\: b-a)}
\int_0^1 e^{zt} t^{a-1} (1-t)^{b-a-1} \, dt,
\qquad
0<a<b, \quad z\in\mathbb{C} .
\]
For $a$ and $b$ fixed,
$\hyperIFI(a;b;\,\cdot\,)$ is an entire function.
Its derivative equals
\[
\frac{\partial}{\partial z}
\hyperIFI(a;b;z) = \frac{a}{b}
\hyperIFI(a+1;\:b+1;\:z) .
\]
For all $0<a<b$ and $z\in\mathbb{R}$
\[
\hyperIFI(a; b; z) > 0, \qquad
\hyperIFI(a; b; 0) = 1.
\]
Notice that if $0<\alpha<1$ and $x>0$, then
\[
\frac{1}{\mathrm{B}(\alpha, \: 1-\alpha)}
\int_0^x \frac{\exp(zt) \, dt}
{t^{1-\alpha} (x-t)^\alpha} =
\hyperIFI(\alpha; 1; xz) .
\]

Being considered an equation for unknown $h$,
\eqref{eq:reprg} is equivalent to $I_{0+}^\alpha h = g_0$,
where
\[
g_0(x) = g(x) - \frac{1}{\Gamma(\alpha) x^{1-\alpha}}
= \frac{e^{\beta x} - 1}{\Gamma(\alpha) x^{1-\alpha}} .
\]
Then
\begin{align*}
(I_{0+}^{1-\alpha} g_0)(x) =
\frac{1}{\mathrm{B}(\alpha, \: 1-\alpha)}
\int_0^x \frac{e^{\beta t} - 1}{t^{1-\alpha}
(1-t)^\alpha} \, dt =
\hyperIFI(\alpha; 1; \beta x) - 1 .
\end{align*}
Besides, $\hyperIFI(\alpha; 1; \beta x) - 1$ is an absolutely
continuous function in $x$, and\linebreak  $\hyperIFI(\alpha; 1;
\beta x) - 1 = 0$ if $x=0$. According to Theorem~\ref{thm:thmeqAbel}, the
equation $I_{0+}^\alpha h = g_0$ has the unique solution $h =
L^1[0,T]$, which is equal to
\begin{equation}\label{eq:concreteTTg}
h(x) = \frac{\partial(\hyperIFI(\alpha; 1; \beta x) - 1)}{\partial x} =
\alpha \beta \hyperIFI(\alpha+1;\: 2;\: \beta x).
\end{equation}
The constructed function $h(x)$ is a solution to
\eqref{eq:reprg} and is continuously differentiable.

In summary, $h \in C^1[0,T]$,
$y(x) = 1$, and $y \in C^1[0,T]$.
According to Theorem~\ref{thm:EUVolterra1}
the integral equation
\begin{equation}\label{eq:Volterra1y1}
\int_0^x f(t) \, g(x-t) \, dt = 1, \qquad x\in(0,T],
\end{equation}
has a unique solution $f \in L^1[0,T]$
(up to equality almost everywhere).
The solution is continuous in $(0,T]$.
\begin{remark} The fact that the functions
$g$ and $h$ defined in \eqref{eq:concreteg} and
\eqref{eq:concreteTTg}, respectively,
satisfy \eqref{eq:reprg}, can be checked directly.
For such verification, one can apply
Lemma~2.2(i) from \cite{Norros}.
\end{remark}

\subsubsection{Positive solution to the Volterra
integral equation}
\begin{theorem}\label{thm:positive}
Let the conditions of Theorem~\ref{thm:EUVolterra1}
hold true.
Additionally, let
\[
y(x)>0, \quad y'(x)\ge 0, \quad
h(x) < 0 \qquad
\mbox{for all $x\in[0,T]$}.
\]
Then the continuous solution $f(x)$ to \eqref{eq:Volterra1}
attains only positive values in $(0,T]$.
\end{theorem}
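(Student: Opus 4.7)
The plan is to ride on the reduction already carried out in the proof of Theorem~\ref{thm:EUVolterra1}. That proof produces the continuous solution as $f = \mathrm{I}_{0+}^{1-\alpha}(F')/\Gamma(1-\alpha)$ plus a boundary term, where $F = I_{0+}^{\alpha}f$ belongs to $C^1[0,T]$ and satisfies the second-kind Volterra equation \eqref{eq:Volterra2}. Writing $k(x) := -h(x) > 0$, this equation takes the manifestly positive form
\[
F(x) = y(x) + \int_0^x F(t)\,k(x-t)\,dt, \qquad x\in[0,T].
\]
It therefore suffices to show that $F > 0$ and $F' > 0$ on $[0,T]$ and then to read off positivity of $f$ from the reconstruction formula \eqref{eq:freprofF}.

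First I would establish $F > 0$ by the standard Neumann-series argument for Volterra equations of the second kind: with strictly positive forcing $y$ and nonnegative kernel $k$, every Picard iterate is nonnegative, and the resulting sum $F$ satisfies $F(x) \ge y(x) > 0$ on $[0,T]$. (Equivalently, if $x_0$ were the first zero of $F$, continuity would force $0 = F(x_0) = y(x_0) + \int_0^{x_0} F(t)\,k(x_0-t)\,dt \ge y(x_0) > 0$, a contradiction.) In particular $F(0) = y(0) > 0$.

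The delicate step is showing $F'(x) > 0$; here one must avoid losing sign control. Rather than differentiate the convolution in the form $\int_0^x F(t)\,h(x-t)\,dt$ (which would spawn the awkward term $\int_0^x F(t)\,h'(x-t)\,dt$ of unknown sign), I would first change variables to $s = x-t$, writing the convolution as $\int_0^x F(x-s)\,h(s)\,ds$, and only then differentiate. This yields
\[
F'(x) = y'(x) + F(0)\,k(x) + \int_0^x F'(t)\,k(x-t)\,dt,
\]
which is itself a Volterra equation of the second kind for $F'$, with forcing $y'(x)+F(0)\,k(x) \ge F(0)\,k(x) > 0$ and nonnegative kernel $k$. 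The same Picard argument then delivers $F'(x) \ge y'(x)+F(0)\,k(x) > 0$ on $[0,T]$.

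Finally, by Corollary~\ref{cor:coreqAbel} together with \eqref{eq:freprofF},
\[
f(x) = \frac{1}{\Gamma(1-\alpha)}\left(\int_0^x \frac{F'(t)\,dt}{(x-t)^{\alpha}} + \frac{F(0)}{x^{\alpha}}\right),
\]
and both summands are strictly positive for $x\in(0,T]$, giving $f(x) > 0$ there. The main obstacle, as noted, is setting up the positivity of $F'$; it is overcome precisely by rewriting the convolution so that differentiation does not touch $h$, leaving a positive-kernel Volterra equation for $F'$ to which the same monotone iteration applies.
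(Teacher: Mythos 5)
Your proposal is correct and follows essentially the same route as the paper: both hinge on rewriting the convolution as $\int_0^x F(x-t)\,h(t)\,dt$ before differentiating, so that $F'$ satisfies a second-kind Volterra equation with strictly positive forcing $y'(x)-F(0)h(x)$ and nonnegative kernel $-h$, and then reading off $f>0$ from \eqref{eq:freprofF}. The only cosmetic difference is that you extract positivity of $F'$ via the Neumann/Picard iteration, whereas the paper argues by contradiction at the first point where $F'\le 0$; these are interchangeable here.
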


\begin{proof}
Notice that \eqref{eq:Volterra2} implies $F(0) = y(0) > 0$.
Taking this into account, let's  differentiate both sides of \eqref{eq:Volterra2}
the other way:
\begin{gather}
F(x) = y(x) - \int_0^x F(x-t) \,  h(t) \, dt,
\nonumber \\
F'(x) = y'(x) - F(0) \, h(x)
- \int_0^x F'(x-t) \,  h(t) \, dt
\label{eq:Volterra2dF} .
\end{gather}

Let us prove that $F'(x)>0$ in $[0,T]$
by contradiction.
Assume the contrary, that is $\exists x \in [0,1] : F'(x) \le 0$.
Since the function $F'(x)$ is continuous in $[0,T]$,
the contrary implies the existence of the minimum in
\[
x_0 = \min\{x\in[0,T] : F'(x) \le 0\} .
\]
But for $x=x_0$ the left-hand side in
\eqref{eq:Volterra2dF} is less or equal then zero,
while the right-hand side is greater than zero.
Thus, \eqref{eq:Volterra2dF} does not hold true.

The proof also works for $x_0 = 0$.
There is a contradiction. Thus, we have proved that $F'(x)>0$ for all $x\in[0,T]$.
By \eqref{eq:freprofF},
$f(x)>0$ for all $x\in(0,T]$.
\end{proof}


\begin{thebibliography}{13}
\bibitem{AMN01}
Elisa Al\`os, Olivier Mazet, and David Nualart.
\newblock Stochastic calculus with respect to {G}aussian processes.
\newblock {\em Ann. Probab.}, 29(2):766--801, 2001.

	
\bibitem{BannaMono2019}
Oksana Banna, Yuliya Mishura, Kostiantyn Ralchenko, and Sergiy Shklyar.
\newblock {\em Fractional Brownian Motion: Weak and Strong Approximations and Projections}.
\newblock Mathematics and Statistics Series. ISTE and Wiley, London and Hoboken
  NJ, 2019.

\bibitem{Bell2015}
Jourdan Bell.
\newblock The {Kolmogorov} continuity theorem, {H\"older} continuity, and
  {Kolmogorov--Chentsov} theorem.
\newblock Lecture notes, University of Toronto, 2015.
\newblock Available at
  {\texttt{https:\allowbreak//individual.utoronto.ca/\allowbreak
  jordanbell/\allowbreak notes/\allowbreak kolmogorovcontinuity.pdf}}.

\bibitem{Boguslavskaya2018}
Elena Boguslavskaya, Yuliya Mishura, and Georgiy Shevchenko.
\newblock Replication of {Wiener}-transformable stochastic processes with
  application to financial markets with memory.
\newblock In S.~Silvestrov, A.~Malyarenko, and M.~Rancic, editors, {\em
  Stochastic Processes and Applications}, volume 271 of {\em Springer
  Proceedings in Mathematics \& Statistics}, pages 335--361, 2018.

\bibitem{fernique}
Xavier Fernique.
\newblock Continuit\'{e} des processus {G}aussiens.
\newblock {\em C. R. Acad. Sci. Paris}, 258:6058--6060, 1964.

\bibitem{kochubei}
Anatoly~N. Kochubei.
\newblock General fractional calculus, evolution equations, and renewal
  processes.
\newblock {\em Integral Equations Oper. Theory}, 71(4):583--600, 2011.

\bibitem{LiebLoss}
Elliot~H. Lieb and Michael Loss.
\newblock {\em Analysis}.
\newblock AMS, Providence, 2001.

\bibitem{ManPol2000}
A.~V. Manzhirov and A.~D. Polyanin.
\newblock {\em Integral equations handbook}.
\newblock Factorial Press, Moscow, 2000.
\newblock (In Russian).

\bibitem{bookmyus}
Yuliya~S. Mishura.
\newblock {\em Stochastic calculus for fractional {B}rownian motion and related
  processes}, volume 1929 of {\em Lecture Notes in Mathematics}.
\newblock Springer-Verlag, Berlin, 2008.

\bibitem{Norros}
Ilkka Norros, Esko Valkeila, and Jorma Virtamo.
\newblock An elementary approach to a {Girsanov} formula and other analytical
  results on fractional {Brownian} motions.
\newblock {\em Bernoulli}, 5(4):571--587, 1999.

\bibitem{Samko}
S.~G. Samko, A.~A. Kilbas, and O.~I. Marichev.
\newblock {\em Fractional Integrals and Derivatives: Theory and Applications}.
\newblock Gordon and Breach, Amsterdam, 1993.

\bibitem{SC03}
Stefan~G. Samko and Rog\'erio~P. Cardoso.
\newblock Sonine integral equations of the first kind in {$L_p(0,b)$}.
\newblock {\em Fract. Calc. Appl. Anal.}, 6(3):235--258, 2003.

\bibitem{sonin}
N.~Ya. Sonin.
\newblock {\em Investigations of cylinder functions and special polynomials.
  (Russian)}.
\newblock Gosudarstv. Izdat. Tehn.-Teor. Lit., Moscow, 1954.

\bibitem{Wolfram1F1}
Wolfram{ }Research.
\newblock Kummer confluent hypergeometric function {$_1F_1$}, {The Mathematical
  Functions Site}, 2008.
\newblock Available at {\texttt{www.functions.wolfram.com/\allowbreak
  HypergeometricFunctions/\allowbreak Hypergeometric1F1}}. Accessed 2019-11-05.

\end{thebibliography}
\end{document}